%%
%% VERSION October 2008 for arxiv
%%

\documentclass[11pt]{amsart}
\usepackage{amssymb,amscd,latexsym}

\textheight = 22truecm \topmargin  = -0.8cm \textwidth=6in
\hoffset=-1.3cm

\newtheorem{theorem}{Theorem}[section]
\newtheorem{lemma}[theorem]{Lemma}
\newtheorem{corollary}[theorem]{Corollary}
\newtheorem{proposition}[theorem]{Proposition}
\newtheorem{remark}[theorem]{Remark}

\newtheorem{definition}[theorem]{Definition}

%
% MACROS
%
\newcommand{\rar}{\rightarrow}
\newcommand{\lar}{\longrightarrow}

\def\kx{K[x_1,\ldots, x_n]}
\def\mgfr{minimal graded free resolution}
\def\iff{if and only if}
\def\N{\mathbb N}
\def\Q{\mathbb Q}
\def\ff{\mathbf f}
\def\iffm{\Leftrightarrow}
\def\depth#1{{\rm depth}\,(#1)}
\def\lcm#1{{\rm lcm}\,(#1)}
\def\comp#1{{\rm comp}\,(#1)}
\def\link#1{{\rm link}\,(#1)}

\hfuzz 4pt

\begin{document}

\title[Nonlinear syzygies of ideals associated to graphs]
{First nonlinear syzygies of ideals associated to graphs}

\author[O. Fern\'andez-Ramos]{Oscar Fern\'andez-Ramos}
\email{oscarf@agt.uva.es}

\author[P. Gimenez]{Philippe Gimenez}
\email{pgimenez@agt.uva.es}

\address{Depto. de Algebra, Geometr\'{\i}a y
Topolog\'{\i}a, Facultad de Ciencias, Universidad de Valladolid, 47005
Valladolid, Spain}

\medskip

\begin{abstract}
Consider an ideal $I\subset \kx$, with $K$ an arbitrary field,
generated by monomials of degree two. Assuming that $I$ does not
have a linear resolution, we determine the step $s$ of the \mgfr{}
of $I$ where nonlinear syzygies first appear, we show that at this
step of the resolution nonlinear syzygies are concentrated in degree
$s+3$, and we compute the corresponding graded Betti number
$\beta_{s,s+3}$. The multidegrees of these nonlinear syzygies are
also determined and the corresponding multigraded Betti numbers are
shown to be all equal to 1.
\end{abstract}

\maketitle

\section{Introduction}\label{section_intro}

Let $R:=\kx$ be the polynomial ring in $n$ variables over an
arbitrary field $K$, and let $I$ be an ideal in $R$ generated by a
finite set $\ff=\{f_1,\ldots,f_m\}$ of distinct monomials in $R$ of
degree two. Associated to $I$ there is a graph $G(I)$ with vertex
and edge sets $ V_{G(I)}:=\{(1),\ldots,(n)\}$ and
$E_{G(I)}:=\{(i,j);\, 1\leq i\leq j\leq n\,/\ x_ix_j\in I\}$,
respectively, and we shall say that $I$ is an {\it ideal associated
to a graph}. There is another graph related to the ideal $I$ that
will be featured in this paper, namely, the ({\it simple}) {\it
complement} $G(I)^c$ of $G(I)$ with vertex set $V_{G(I)}$ and edge
set $E_{G(I)^c}:=\{(i,j);\, 1\leq i<j\leq n\,/\ (i,j)\notin
E_{G(I)}\}$. Note that the graph $G(I)$ may have loops while
$G(I)^c$ is always a simple graph. When $I$ is squarefree, i.e.,
when $G(I)$ is simple, $I$ is called an {\it edge ideal}. In this
connection we mention the survey in Villarreal's book
\cite[Chapter~6]{VillaBook}.

\smallskip
As a general aim, one would like to establish a correspondence
between algebraic properties of the ideal $I$ (or the ring $R/I$, or
the subalgebra $K[\ff]\subset R$) and the graph theoretical data of
$G(I)$ (or of any other graph associated to $I$ such as $G(I)^c$).
Two illustrations of this correspondence are the combinatorial
characterizations of normality and polarizability of the subalgebra
$K[\ff]\subset R$ given in \cite{SimVaVilla} and \cite{PolarSyz},
respectively. In this note, we shall focus on properties of $I$
related to its \mgfr{}
\begin{equation}\label{eq_resol}
0\rar \bigoplus_j R(-j)^{\beta_{p,j}}\lar\cdots\lar \bigoplus_j
R(-j)^{\beta_{0,j}} \lar I\rar 0
\end{equation}
where $R(-j)$ denotes the graded $R$-module obtained by shifting the
degrees of homogeneous elements in $R$ by $j$. The number of
generators of degree $j$ in the $i$th syzygy module is denoted by
$\beta_{i,j}$. These numbers do not depend on the \mgfr{} and are
called the {\it graded Betti numbers} of $I$. The numerical
information contained in the \mgfr{} of $I$ (degrees of the syzygies
and graded Betti numbers) can be displayed on a table with $p+1$
columns labeled $0,1,\ldots,p$ that correspond to the steps in the
resolution and with rows corresponding to degrees, where the entry
in the $j$th row of the $i$th column is $\beta_{i,i+j}$. This table
is called the {\it Betti diagram} of $I$; see \cite{Macaulay} and
\cite[page~7]{EisNew}. The size of the Betti diagram of $I$ is a
measure of the size of the \mgfr{} of $I$ and hence, in some sense,
a measure of the complexity of the ideal $I$. By
\cite[Proposition~1.9]{EisNew}, the first row (with a nonzero entry)
of the Betti diagram is the one labeled by the {\it initial degree}
$\delta$ of $I$, i.e., the smallest integer $j$ such that
$\beta_{0,j}\neq 0$. In our situation, $\delta=2$ and in fact
$\beta_{0,j}=0$ for all $j\neq 2$. The last row (with a nonzero
entry) of the diagram is the one labeled by $m$, the {\it
Castelnuovo-Mumford regularity} of $I$. The number of rows of the
Betti diagram is thus $m-\delta+1$, and the number of columns is
related to the depth of $R/I$ by the Auslander-Buchsbaum formula,
$p+1=n-\depth{R/I}$.

\smallskip
We say that $I$ has a {\it linear resolution} if $\beta_{i,j}=0$
whenever $j\neq i+2$. In terms of the Betti diagram, $I$ has a
linear resolution \iff{} the diagram has only one row. The classical
result of Fr\"oberg \cite[Theorem~1]{Froberg} characterizes edge
ideals with linear resolution. It has recently been recovered by
Eisenbud, Green, Hulek and Popescu that provide some additional
information. When the resolution of $I$ is nonlinear, they show in
\cite[Theorem~2.1]{EisEtal} that the step of the \mgfr{} of $I$
where nonlinear syzygies first appear is determined by the length of
the shortest cycle in $G(I)^c$ having no chord. A similar result for
ideals associated to graphs that are not squarefree can be deduced
from \cite[Proposition~2.3]{EisEtal}. In this note, we prove the
following more precise result:

\begin{theorem}\label{thm_main}
Consider an ideal $I\subset \kx$ generated by monomials of degree
two, with $K$ an arbitrary field. Let $G(I)$ be the graph associated
to $I$ and let $G(I)^c$ be its complement. For all $d\geq 3$, set
$\displaystyle{i_d:=\min_{1\leq i\leq p}{\{i;\ \beta_{i,i+d}\neq
0\}}}$ if $\beta_{i,i+d}\neq 0$ for some $i\geq 1$, $i_d:=0$
otherwise. Then\,{\rm:}
\begin{enumerate}
\item\label{item_in_mainthm_linpres}
$i_3=1$ \iff{} $G(I)$ has at least one induced subgraph consisting
of two disjoint edges. When this occurs, $\beta_{1,4}$ is the number
of such induced subgraphs of $G(I)$, and $\beta_{1,j}=0$ for all
$j>4$.
\item\label{item_in_mainthm_nonlinpres}
$i_3>1$ \iff{} $G(I)$ has no induced subgraph consisting of two
disjoint edges and $G(I)^c$ has at least one induced cycle of length
$\geq 5$. When this occurs, $i_3=r-3$ where $r$ is the smallest
integer $\geq 5$ such that $G(I)^c$ has an induced $r$-cycle,
$\beta_{i_3,i_3+3}$ is the number of induced $r$-cycles in $G(I)^c$,
and $\beta_{i,j}=0$ for all $i\leq i_3$ and $j>i_3+3$.
\item\label{item_in_mainthm_linres}
$i_3=0$ \iff{} $G(I)$ has no induced subgraph consisting of two
disjoint edges and $G(I)^c$ is chordal. When this occurs, $I$ has a
linear resolution.
\end{enumerate}
\end{theorem}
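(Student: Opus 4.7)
The plan is to reduce to the squarefree case by polarization and then apply Hochster's formula together with properties of clique complexes of chordal graphs.

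\textbf{Polarization.} I would first polarize: each generator $x_i^2\in I$ is replaced by $x_iy_i$ with a fresh variable $y_i$, yielding a squarefree ideal $I^{\mathrm{pol}}$ with the same graded Betti numbers as $I$. A case analysis then gives bijections between the induced two-disjoint-edge subgraphs of $G(I)$ (possibly involving loops) and those of $G(I^{\mathrm{pol}})$, and between the induced $r$-cycles of $G(I)^c$ and those of $G(I^{\mathrm{pol}})^c$ for $r\geq 5$. The key observation in the second bijection is that any induced $r$-cycle of $G(I^{\mathrm{pol}})^c$ containing some new vertex $y_i$ would, since $y_i$ is adjacent in the complement to all vertices except $i$, force two distinct cycle vertices to equal $i$---impossible when $r\geq 5$.

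\textbf{Hochster and identification.} Assume now $I=I(G)$ for a simple graph $G$, and let $\Delta$ be the clique complex of $G^c$, so $I=I_\Delta$. Hochster's formula gives
\[
 \beta_{i,j}(I)\,=\,\sum_{W\subseteq V,\;|W|=j}\dim_K\tilde H_{j-i-2}\bigl(\Delta(G^c|_W);K\bigr).
\]
The main topological input is that the clique complex of a chordal graph is contractible (by induction using a simplicial vertex as a cone apex). Let $r$ be the length of the shortest induced cycle of $G^c$ of length $\geq 4$, or $r=\infty$ otherwise. For $|W|<r$, every induced cycle of $G^c|_W$ would be induced in $G^c$ of length $<r$, contradicting minimality, hence $G^c|_W$ is chordal and contributes nothing. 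For $|W|=r$, a contributing $W$ must have $G^c|_W=C_r$ (any extra edge would chord the cycle), with $\Delta(G^c|_W)=C_r$ and $\tilde H_1=K$. This yields $i_3=r-3$, identifies $\beta_{i_3,i_3+3}(I)$ with the number of induced $r$-cycles of $G^c$, and shows that the corresponding multigraded Betti numbers are all equal to $1$. The case $r=\infty$ gives part (3) (Fr\"oberg's theorem); $r=4$ is part (1); $r\geq 5$ is part (2).

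\textbf{Vanishing.} It remains to prove $\beta_{i,j}=0$ for $i\leq i_3$ and $j>i_3+3$. For $r=4$, this is $\beta_{1,j}=0$ for $j\geq 5$, which follows from Taylor's bound: the lcm of two degree-$2$ generators has degree at most $4$. For $r\geq 5$, one must show $\tilde H_k(\Delta(G^c|_W))=0$ for all $W$ with $|W|>r$ and all $k\geq 2$; I expect this to be the main technical obstacle. A natural route is induction on $|W|$: for $|W|=r+1$, any extra vertex $u$ attached to an induced $r$-cycle must, to avoid creating an induced $4$-cycle, have its neighbors on the cycle form a contiguous arc, and a direct Mayer--Vietoris decomposition against the resulting contractible fan of triangles then forces $\tilde H_k=0$ for $k\geq 2$. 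The key step is to extend this to larger $W$, ideally by exhibiting at each stage a vertex of $G^c|_W$ (outside the induced $r$-cycles) whose star in $\Delta(G^c|_W)$ is contractible, so that one can peel off vertices and induct down.
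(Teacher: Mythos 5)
Your outline for identifying $i_3$ and computing $\beta_{i_3,i_3+3}$ is sound and takes a genuinely different (and more direct) route than the paper. The paper first computes the whole resolution of the edge ideal whose complement is an $n$-cycle (Proposition~\ref{prop_example}), then uses the Gasharov--Hibi--Peeva restriction theorem on multigraded subcomplexes (Theorem~\ref{thm_peevaetal}, Proposition~\ref{prop_ineq_betti}), and finally handles the non-squarefree case by a delicate analysis of which $\underline{s}$ can contribute (Theorem~\ref{thm_lastthm}, Lemma~\ref{lem_cycles_and_whiskers}). You instead polarize at the outset and apply Hochster's formula directly; the identification of contributing $W$ with $|W|=r$ as exactly those with $G^c|_W\cong C_r$ is correct (if $\tilde H_1\neq 0$ then $G^c|_W$ is non-chordal, any induced cycle of length $\geq 4$ has length $\geq r$ by minimality, hence is a Hamilton cycle, hence $G^c|_W=C_r$), and your two bijection lemmas for the polarization are exactly Lemma~\ref{lem_cycles_and_whiskers} in disguise plus the whisker observation used in Lemma~\ref{lem_katzman_with_loops}. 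This part would have given the paper a somewhat shorter proof of the count of minimal-degree nonlinear syzygies, at the cost of not producing the explicit Betti numbers of Proposition~\ref{prop_example}.

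However, there is a genuine gap in the vanishing statement $\beta_{i,j}=0$ for $i\leq i_3$ and $j>i_3+3$, and you essentially flag it yourself. Your proposed route is to show $\tilde H_k(\Delta(G^c|_W))=0$ for $|W|>r$ and $k\geq 2$ by some Mayer--Vietoris peeling, but beyond the first step $|W|=r+1$ you only describe the hope of finding a vertex with contractible star at each stage; no mechanism is given for why such a vertex exists, and the claim as you phrase it (vanishing for \emph{all} $k\geq 2$) is stronger than what is needed (only $k\geq |W|-r+1$) and not obviously true. The paper handles this by proving the much sharper Theorem~\ref{thm_shape_Betti}, namely $1\leq i_3<i_4<\cdots<i_m\leq p$, via Lemma~\ref{lem_shape}: for a minimal $S$ with $\tilde H_{d-2}(\Delta(G^c_S))\neq 0$, one chooses a vertex $x$ whose link has strictly fewer than $|S|-1$ vertices (which must exist, else $\Delta(G^c_S)$ is a simplex), and applies Hibi's long exact sequence relating $\tilde H_{d-2}(\Delta(G^c_S))$, $\tilde H_{d-2}(\Delta(G^c_{S\setminus\{x\}}))$ and $\tilde H_{d-3}(\link{x,\Delta(G^c_S)})$ to push the nonvanishing down one homological degree and into a smaller vertex set, yielding $\beta_{i,i+d-1}\neq 0$ for some $i<i_d$. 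This is the missing idea; without it or something equivalent, your argument does not establish the last clauses of items (\ref{item_in_mainthm_nonlinpres}) and (\ref{item_in_mainthm_linres}).
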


Note that when $I$ does not have a linear resolution, the step $i_3$
of the \mgfr{} of $I$ where nonlinear syzygies first appear is
determined, and the results of Eisenbud, Green, Hulek and Popescu
mentioned before are recovered. Moreover, we show that the nonlinear
syzygies at the $i_3$th step of the resolution are concentrated in
degree $i_3+3$ and we compute the corresponding Betti number
$\beta_{i_3,i_3+3}$.

\smallskip
If we consider the standard $\N^n$-grading on the polynomial ring
$R$, monomial ideals are $\N^n$-graded modules. In particular, $I$
has a minimal multigraded free resolution as the one in
(\ref{eq_resol}) where one substitutes multidegrees
$\underline{s}\in\N^n$ for degrees $j\in\N$. The number of syzygies
of multidegree $\underline{s}$ in the $i$th syzygy module is denoted
by $\beta_{i,\underline{s}}$ and these numbers are called the {\it
multigraded Betti numbers} of $I$. The multigraded resolution
provides a finer numerical information than the graded one. In
particular, the graded Betti numbers are recovered from the
multigraded ones by
 $\beta_{i,j}=\sum_{\underline{s};\,\vert
 \underline{s}\vert=j}\beta_{i,\underline{s}}$.
In this note, we shall work with the multigraded resolution and we
shall prove, in fact, a result which is stronger than the one stated
in Theorem~\ref{thm_main}: indeed, the multidegrees $\underline{s}$
such that $\beta_{i_3,\underline{s}}\neq 0$ and
$\vert\underline{s}\vert=i_3+3$ are determined, and we shall see
that $\beta_{i_3,\underline{s}}=1$ for each of these multidegrees.

\smallskip
The \mgfr{} of an ideal associated to a graph and, in particular,
its Betti numbers depend in general on the characteristic of the
field $K$; see \cite[Section~4]{Katzman}. Nevertheless, whenever a
property of the resolution can be characterized in terms of the
combinatorial data of the graph $G(I)$ or any other graph associated
to the generating monomials of $I$, this will not occur. The
aforementioned result of Fr\"oberg illustrates this observation. So
does our Theorem~\ref{thm_main} where the index $i_3$ and the
corresponding Betti number $\beta_{i_3,i_3+3}$ are computed
independently of the characteristic of $K$ . This happens with other
Betti numbers as observed by several authors in the case where $I$
is an edge ideal. For example, there is a formula for $\beta_{1,3}$
in \cite{ElVilla}; see \cite[Proposition~6.6.3]{VillaBook}. This
result is later recovered in \cite{RothVantuyl} where the first row
of the Betti diagram of edge ideals is studied. More precisely, a
closed formula for $\beta_{i-2,i}$ is given for $i\leq 5$, and for
all $i$ when $G(I)$ has no induced $4$-cycles. In \cite{Katzman}, it
is shown that for all $i\geq 0$, $\beta_{i,2(i+1)}$ is the number of
induced subgraphs of $G(I)$ consisting of $i+1$ disjoint edges. We
shall see in Lemma~\ref{lem_katzman_with_loops} that the same result
holds for arbitrary ideals associated to graphs.

\smallskip
In Section~\ref{section_froberg}, we recall Fr\"oberg's
characterization of edge ideals having a linear resolution and
review some refinements obtained recently. In particular, the
results of Eisenbud, Green, Hulek and Popescu cited before are
mentioned. In Section~\ref{section_example}, we focus on a special
class of edge ideals whose resolution is nonlinear. The Betti
diagram of these ideals is completely determined in
Proposition~\ref{prop_example} and we can observe that
Theorem~\ref{thm_main} holds in this case. This example plays an
important role in the proof of the main result in
Section~\ref{section_results}, Theorem~\ref{thm_lastthm}, that
computes the number of nonlinear syzygies of smallest degree of
ideals associated to graphs. In Section~\ref{section_shape}, a
description of the shape of the Betti diagram is given in
Theorem~\ref{thm_shape_Betti}. This is the last ingredient for the
proof of Theorem~\ref{thm_main} which is given at the very end of
the paper.

\section{Fr\"oberg's result and its later refinements}\label{section_froberg}

Before stating Fr\"oberg's result, let us recall some definitions.

\begin{definition}\label{def_cycle}
 {\rm
Let $G$ be a graph with vertex set $V$ and edge set $E$. Given
$t\geq 3$, a {\it $t$-cycle} in $G$ is a subgraph of $G$ whose edges
are of the form $(v_1,v_2)$, $(v_2,v_3)$, \ldots, $(v_t,v_1)$ where
$v_1,\ldots,v_t$ are distinct elements in $V$. A $t$-cycle $C$ in
$G$ has a {\it chord} if $(v_i,v_j)\in E$ for some $1\leq i<j\leq t$
such that $(v_i,v_j)$ is not an edge of $C$. An {\it induced} (or
{\it minimal}) $t$-cycle is a $t$-cycle in $G$ with no chord. More
generally, a subgraph of $G$ is {\it induced} if its edge set
contains all the elements in $E$ joining two distinct elements in
its vertex set. A graph $G$ is said to be {\it chordal} if it has no
induced $t$-cycle with $t\geq 4$.
 }\end{definition}

\begin{theorem}[{\cite[Theorem~1]{Froberg}}]\label{thm_froberg}
An edge ideal $I$ has a linear resolution \iff{} the graph $G(I)^c$
is chordal.
\end{theorem}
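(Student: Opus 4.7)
The plan is to argue via Hochster's formula, which for a squarefree monomial ideal identifies the multigraded Betti numbers of $R/I$ with reduced simplicial homology of induced subcomplexes of its Stanley--Reisner complex $\Delta$. Setting $G:=G(I)$, I first identify $\Delta$: its faces are the subsets $\sigma\subseteq V$ whose squarefree monomial does not lie in $I$, that is, the independent sets of $G$, and these are precisely the cliques of $G(I)^c$. Thus $\Delta$ is the flag (clique) complex of $G(I)^c$, and for $W\subseteq V$ the restriction $\Delta_W$ is the flag complex of the induced subgraph $(G(I)^c)_W$.

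Hochster's formula then reads
\begin{equation*}
\beta_{k,j}(R/I)=\sum_{W\subseteq V,\ |W|=j}\dim_K \tilde H_{j-k-1}(\Delta_W;K).
\end{equation*}
Since $I$ is generated in degree $2$, $I$ has a linear resolution \iff{} $\beta_{k,j}(R/I)=0$ for every $k\geq 1$ and every $j\neq k+1$. Via Hochster this translates into the vanishing condition
\begin{equation*}
\tilde H_d(\Delta_W;K)=0 \quad\text{for every } W\subseteq V \text{ and every } d\geq 1.
\end{equation*}
The theorem thus reduces to the purely combinatorial--topological statement that the flag complex of $G(I)^c$, and of all its induced subgraphs, has vanishing reduced homology in positive degrees \iff{} $G(I)^c$ is chordal.

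For the forward direction, if $G(I)^c$ has an induced $r$-cycle with $r\geq 4$, I take $W$ to be its vertex set; the absence of chords forces $\Delta_W$ to contain no triangle, so $\Delta_W$ is the $1$-dimensional complex given by the $r$-cycle itself, and $\tilde H_1(\Delta_W;K)\cong K$ is nonzero. For the converse I use the classical fact that every chordal graph admits a simplicial vertex $v$, that is, one whose neighbourhood $N(v)$ induces a clique, and induct on the number of vertices. Writing $\Delta$ as the union of the star $\mathrm{st}(v)$ with $\Delta|_{V\setminus\{v\}}$, the intersection is the link $\mathrm{lk}(v)$, which equals the full simplex on $N(v)$ since $N(v)$ is a clique of $G(I)^c$; both the star and the link are cones and hence contractible, so a Mayer--Vietoris argument identifies $\tilde H_d(\Delta;K)$ with $\tilde H_d(\Delta|_{V\setminus\{v\}};K)$, which vanishes for $d\geq 1$ by induction, the chordal property being inherited by induced subgraphs.

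The main obstacle will be the ``if'' direction: establishing the acyclicity of flag complexes of chordal graphs is where the genuine topological content lies, and it requires the simplicial-vertex induction above (or, equivalently, showing shellability/vertex-decomposability of such complexes). Once this input is in hand, the theorem follows by cleanly pairing Hochster's formula with the combinatorial characterisation.
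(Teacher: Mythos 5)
The paper cites this result as \cite[Theorem~1]{Froberg} and does not reprove it, so there is no internal argument of the paper to compare against. Your proof is correct and is one of the standard modern routes. A few remarks on the substance. The reduction via Hochster's formula is right: writing $\Delta$ for the clique complex of $G(I)^c$, linearity of the resolution of $I$ (equivalently, of $R/I$) is equivalent to $\tilde H_d(\Delta_W;K)=0$ for all $d\geq 1$ and all $W\subseteq V$, and an induced $r$-cycle with $r\geq 4$ produces a restriction $\Delta_W$ that is precisely a $1$-dimensional $r$-gon, hence has $\tilde H_1\neq 0$. For the converse you use Dirac's theorem to choose a simplicial vertex $v$, decompose $\Delta=\overline{\mathrm{st}}(v)\cup\Delta_{V\setminus\{v\}}$ with intersection $\mathrm{lk}(v)$, observe that $\overline{\mathrm{st}}(v)$ is a cone and $\mathrm{lk}(v)$ is a full simplex on the clique $N(v)$, and conclude from Mayer--Vietoris that $\tilde H_d(\Delta)\cong\tilde H_d(\Delta_{V\setminus\{v\}})$ for $d\geq 1$; this is in fact the same star/link exact sequence the paper later invokes from \cite{hibi} in the proof of Lemma~\ref{lem_shape}, so the machinery is fully consistent with the rest of the paper. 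The only point worth making fully explicit in a write-up is that the acyclicity statement must be established for every restriction $\Delta_W$ with $W\subseteq V$, not just for $\Delta$ itself; you cover this implicitly with the remark that chordality is inherited by induced subgraphs, which is exactly what makes the induction applicable to each $\Delta_W$ separately.
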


This result has recently been recovered by Eisenbud, Green, Hulek
and Popescu. Moreover, if the resolution is nonlinear, they also
determine the step in the resolution where nonlinear syzygies first
appear:

\begin{theorem}[{\cite[Theorem~2.1]{EisEtal}}]\label{thm_eisetal}
If $I$ is an edge ideal with nonlinear resolution, the smallest
integer $r\geq 4$ such that $\beta_{r-3,j}\neq 0$ for some $j\geq r$
coincides with the smallest integer $r\geq 4$ such that $G(I)^c$ has
an induced $r$-cycle.
\end{theorem}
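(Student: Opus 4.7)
The plan is to translate the statement into simplicial topology via Hochster's formula, which for the edge ideal $I=I(G)$ reads
\[
\beta_{i,\sigma}(I) \;=\; \dim_K \tilde H_{|\sigma|-i-2}(\Delta|_\sigma;K),
\]
where $\Delta$ is the independence complex of $G(I)$ (equivalently, the clique complex of $G(I)^c$) and $\Delta|_\sigma$ denotes its restriction to $\sigma\subseteq V$. Since $\Delta$ is flag, $\Delta|_\sigma$ coincides with the clique complex of $G(I)^c|_\sigma$, and $\operatorname{lk}_{\Delta|_\sigma}(v)=\Delta|_{N(v)\cap\sigma}$ where $N(v)$ denotes the neighborhood in $G(I)^c$. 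Let $r_0$ be the shortest induced cycle length of $G(I)^c$ of length $\geq 4$, which exists by Fröberg's theorem since $I$ has nonlinear resolution, and let $r_1$ be the integer in the statement; the goal is to show $r_0=r_1$.

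For $r_1\leq r_0$, I take $\sigma$ to be the vertex set of an induced $r_0$-cycle of $G(I)^c$: having no chord and $r_0\geq 4$, the flag complex $\Delta|_\sigma$ is just that cycle, hence homotopy equivalent to $S^1$, and Hochster yields $\beta_{r_0-3,r_0}(I)\geq 1$.

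For $r_0\leq r_1$, I choose $\sigma$ of minimum cardinality $j$ with $\beta_{r_1-3,\sigma}(I)\neq 0$, set $k:=j-r_1+1\geq 1$ so that $\tilde H_k(\Delta|_\sigma)\neq 0$, and aim to force $k=1$. Assuming $k\geq 2$, I pick any $v\in\sigma$; the Mayer--Vietoris sequence for the decomposition $\Delta|_\sigma=\operatorname{st}(v)\cup\Delta|_{\sigma\setminus v}$, whose intersection is the link $\Delta|_{N(v)\cap\sigma}$, furnishes the exact piece
\[
\tilde H_k(\Delta|_{\sigma\setminus v}) \;\longrightarrow\; \tilde H_k(\Delta|_\sigma) \;\longrightarrow\; \tilde H_{k-1}(\Delta|_{N(v)\cap\sigma}).
\]
If the left-hand term is nonzero for some $v$, Hochster translates it into $\beta_{r_1-4,\sigma\setminus v}(I)\neq 0$, placing $r_1-1$ into the set defining $r_1$; the borderline case $r_1=4$ is excluded because it would demand $\beta_{0,j-1}\neq 0$ with $j-1\geq 3$, impossible in degree two. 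Otherwise the right-hand arrow is injective for every $v$, yielding $\tilde H_{k-1}(\Delta|_{N(v)\cap\sigma})\neq 0$ for every $v\in\sigma$. Since $G(I)^c|_\sigma$ cannot be complete (else $\Delta|_\sigma$ would be a simplex and kill all reduced homology), some $v$ satisfies $|N(v)\cap\sigma|\leq|\sigma|-2$; combined with the lower bound $|N(v)\cap\sigma|\geq k+2$ valid for any flag complex with nonzero $\tilde H_{k-1}$ and $k\geq 2$, this places the associated integer $r':=|N(v)\cap\sigma|-k+2$ in $[4,r_1-1]$, again contradicting the minimality of $r_1$. Hence $k=1$ and $j=r_1$: the flag complex $\Delta|_\sigma$ on $r_1$ vertices has $\tilde H_1\neq 0$, so $G(I)^c|_\sigma$ is non-chordal (since clique complexes of chordal graphs are contractible by a perfect elimination ordering argument), and consequently contains an induced cycle of length $\leq r_1$, so $r_0\leq r_1$.

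The main obstacle is the simultaneous bookkeeping of the homological degree $k$ and the support size $|\sigma|$ through the Mayer--Vietoris reduction: at each step the new pair must yield an admissible integer $r\geq 4$ in the defining set of $r_1$, so that minimality can be invoked for a contradiction. The combinatorial fact that makes this robust is the dimensional lower bound for flag complexes carrying nontrivial reduced homology, which keeps the entire induction inside the nonlinear-strand range and away from the linear-strand region where Betti numbers vanish automatically.
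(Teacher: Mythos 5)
Your proof is essentially correct and, although the paper only \emph{cites} this theorem (from Eisenbud--Green--Hulek--Popescu), your argument is close in spirit to the technique the paper uses for the stronger Theorem~\ref{thm_main}: the Mayer--Vietoris sequence you write for $\operatorname{st}(v)\cup\Delta|_{\sigma\setminus v}$ is exactly the Hibi long exact sequence invoked in Lemma~\ref{lem_shape}. The difference is in how the pieces are assembled. The paper proves a stronger quantitative statement: it uses the restriction theorem of Gasharov--Hibi--Peeva (Theorem~\ref{thm_peevaetal}) to localize the resolution to the support of a minimal cycle, together with the explicit minimal resolution of the anticycle (Proposition~\ref{prop_example}), which yields the exact value $\beta_{r-3,\underline s}=1$ and hence a count of minimal cycles, and then deduces the shape of the Betti diagram (Lemma~\ref{lem_shape}, Theorem~\ref{thm_shape_Betti}) to show the nonlinear strand is concentrated as claimed. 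Your proof instead runs the Hochster/Mayer--Vietoris reduction directly to force $k=1$, and then invokes Fr\"oberg to produce the cycle; this is shorter, more self-contained, and does not need the explicit cycle resolution, but it only proves existence of the $r$-cycle, not the Betti number count.

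Two small points to tighten. First, you assert that a flag complex with $\tilde H_{k-1}\neq 0$ and $k\geq 2$ needs at least $k+2$ vertices; rather than appeal to a general fact about flag complexes, the cleanest justification here is internal: if $|N(v)\cap\sigma|\leq k+1$, Hochster would force $\beta_{0,T}\neq 0$ for a set $T$ of size $\geq 3$ (or a nonexistent $\beta_{-1}$), contradicting that $I$ is generated in degree two; this also keeps the argument self-contained. Second, clique complexes of chordal graphs need not be contractible (they can be disconnected, so $\tilde H_0$ may be nonzero); what you actually need, and what Fr\"oberg's theorem gives via Hochster, is that $\tilde H_i$ vanishes for $i\geq 1$, which suffices for your conclusion that $\tilde H_1(\Delta|_\sigma)\neq 0$ forces $G(I)^c|_\sigma$ to be non-chordal.
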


The general monomial case is usually reduced to the squarefree
monomial case via polarization but one can sometimes provide a
direct result. This is illustrated by the following result that
gathers two characterizations that have been obtained independently.
Recall that a homogeneous ideal is {\it linearly presented} when the
module of its first syzygies is generated by linear ones. We will
refer to a vertex $v$ such that the loop $(v,v)$ belongs to
$E_{G(I)}$ as a {\it square vertex} of $G(I)$. The {\it edge graph}
$G(I)^\ast$ of $G(I)$ featured in
Proposition~\ref{prop_linearpresent}~(\ref{item_in_linearpresent_polar})
has vertex set $V_{G(I)^\ast}:=\{(1),\ldots,(m)\}$ and edge set
$E_{G(I)^\ast}:=\{(i,j);\, 1\leq i<j\leq m\,/\ \lcm{f_i,f_j}\neq
1\}$. It is always a simple graph. The {\it distance} between two
vertices of $G(I)^\ast$ is the minimum length of a path connecting
them, and the {\it diameter} of $G(I)^\ast$ is the longest distance,
i.e., the longest shortest path, between any two of its vertices
(the diameter of a nonconnected graph is infinite).

\begin{proposition}\label{prop_linearpresent}
Given an ideal $I$ generated by monomials of degree two, the
following are equivalent:
\begin{enumerate}
\item\label{item_in_linearpresent_linearpresent}
$I$ is linearly presented.
\item\label{item_in_linearpresent_eisetal}
The edge ideal $I_{sq}$ obtained from $I$ by removing its square
generators is linearly presented, any two square vertices of $G(I)$
are adjacent, and for any edge  $(v_i,v_j)$ of $G(I)$ that is not a
loop and any square vertex $v_k$, $v_k$ is adjacent to either $v_i$
or $v_j$.
\item\label{item_in_linearpresent_polar}
The graph $G(I)^\ast$ has diameter $\leq 2$.
\end{enumerate}
\end{proposition}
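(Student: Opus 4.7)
The plan is to reduce the general case to that of edge ideals via polarization and then translate the resulting combinatorial condition into the two equivalent ones in the statement. Let $I^{pol}$ denote the polarization of $I$, obtained by replacing each square generator $x_k^2$ by $x_kx_k'$ with $x_k'$ a fresh variable. Polarization preserves multigraded Betti numbers, so $I$ is linearly presented \iff{} $I^{pol}$ is. The graph $H$ associated to the edge ideal $I^{pol}$ is obtained from $G(I)$ by turning each loop at a square vertex $v_k$ into a pendant edge $(v_k,v_k')$, where $v_k'$ is a new vertex appearing in no other edge of $H$.

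For $(\ref{item_in_linearpresent_linearpresent})\iffm(\ref{item_in_linearpresent_eisetal})$, I would invoke the known characterization for edge ideals: $I^{pol}$ is linearly presented \iff{} $H$ has no induced subgraph consisting of two disjoint edges. Indeed, the first syzygies of a quadratic monomial ideal live in multidegrees $\lcm{f_i,f_j}$, all of total degree $\le 4$, so linear presentation reduces to $\beta_{1,4}=0$; this in turn follows from Theorem~\ref{thm_eisetal} applied with $r=4$, or from the Katzman-type formula expressing $\beta_{1,4}$ as the number of induced $2K_2$'s. I would then enumerate the three possible shapes of a pair of disjoint edges in $H$: (a)~both non-loop edges inherited from $G(I)$; (b)~one pendant edge $(v_k,v_k')$ and one non-loop edge $(v_i,v_j)$ with $v_k\notin\{v_i,v_j\}$; (c)~two pendant edges $(v_k,v_k')$ and $(v_l,v_l')$ with $k\neq l$. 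Since the primed vertices belong to no other edge, the only candidate connecting edges all live in $G(I)$: in~(a) the chord edges of $G(I_{sq})$, in~(b) the edges $(v_k,v_i)$ or $(v_k,v_j)$, and in~(c) the edge $(v_k,v_l)$. Forbidding an induced pair of each type translates exactly into the three clauses of~(\ref{item_in_linearpresent_eisetal}).

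For the equivalence with~(\ref{item_in_linearpresent_polar}), I would first identify $G(I)^\ast$ with the edge graph of $H$ in the line-graph sense: its vertices are the generators of $I$, which correspond bijectively to the edges of $H$, and two generators share a variable \iff{} the corresponding edges of $H$ share a vertex (each primed variable belongs to only one generator, so it creates no spurious adjacency). I would then prove the combinatorial lemma that, for a simple graph $\Gamma$ with at least one edge, $\Gamma$ has no induced $2K_2$ \iff{} its edge graph has diameter at most $2$: two disjoint edges $e,f$ of $\Gamma$ admit a common neighbor in the edge graph \iff{} some edge of $\Gamma$ joins an endpoint of $e$ to one of $f$, which is exactly the condition ruling out $\{e,f\}$ as an induced $2K_2$. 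Applied to $\Gamma=H$, this closes the triangle of equivalences.

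The delicate point I anticipate is the bookkeeping in the case analysis above: one must verify that no adjacency in $H$ lies outside those inherited from $G(I)$, which is precisely what forces the ``either $v_i$ or $v_j$'' alternative in~(b) and the pairwise adjacency of all square vertices in~(c). Everything else is a purely formal translation among $G(I)$, $H=G(I^{pol})$, and its edge graph $G(I)^\ast$.
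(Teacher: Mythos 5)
The paper itself does not prove this proposition: the text simply records
(\ref{item_in_linearpresent_linearpresent})$\iffm$(\ref{item_in_linearpresent_eisetal})
as \cite[Proposition~2.3(a)]{EisEtal},
(\ref{item_in_linearpresent_linearpresent})$\iffm$(\ref{item_in_linearpresent_polar})
as \cite[Lemma~4.28]{PolarSyz}, and only sketches that
(\ref{item_in_linearpresent_eisetal})$\iffm$(\ref{item_in_linearpresent_polar})
could be checked directly by translating everything to $G(I)^c$ via
Theorem~\ref{thm_eisetal}. Your argument is a correct, self-contained proof
and takes a genuinely different route. You pivot both equivalences through a
single statement --- $I$ is linearly presented \iff{} the polarized graph
$H:=G(I^{pol})$ has no induced pair of disjoint edges --- which is
Corollary~\ref{cor_katzman} (applicable here since first syzygies of a
quadratic monomial ideal have degree at most $4$, so linear presentation is
exactly $\beta_{1,4}=0$). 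Your three-way case analysis of disjoint edge pairs
in $H$ (two non-pendant; one pendant, one not; two pendants) maps exactly onto
the three clauses of (\ref{item_in_linearpresent_eisetal}), and the
identification of $G(I)^\ast$ with the line graph of $H$ (each primed variable
being private to a single generator) reduces
(\ref{item_in_linearpresent_polar}) to the elementary lemma that a simple
graph has no induced pair of disjoint edges exactly when its line graph has
diameter at most $2$. This is more uniform than the paper's sketch --- which
would go through induced $4$-cycles of $G(I)^c$ --- and avoids reliance on the
external references. The one step that deserves to be made fully explicit is
the point you yourself flag: because the primed vertices of $H$ are pendant,
the only edges of $H$ that can destroy an induced pair of disjoint edges are
the non-loop edges of $G(I)$; this is exactly why cases (a), (b), (c) produce
the first, third, and second clauses of
(\ref{item_in_linearpresent_eisetal}) respectively, and it is also consistent
with the paper's convention (Definition~\ref{def_cycle}) that ``induced''
concerns only edges joining two \emph{distinct} vertices, i.e.\ ignores loops.
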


\smallskip\noindent
(\ref{item_in_linearpresent_linearpresent}) $\iffm$
(\ref{item_in_linearpresent_eisetal}) is
\cite[Proposition~2.3~(a)]{EisEtal} and
(\ref{item_in_linearpresent_linearpresent}) $\iffm$
(\ref{item_in_linearpresent_polar}) is \cite[Lemma~4.28]{PolarSyz}.
Observe that using Theorem~\ref{thm_eisetal} in order to translate
the condition on $I_{sq}$ in (\ref{item_in_linearpresent_eisetal})
in terms of the graph $G(I_{sq})^c=G(I)^c$, one can easily check
directly that (\ref{item_in_linearpresent_eisetal}) $\iffm$
(\ref{item_in_linearpresent_polar}).

\medskip
Finally, when $I$ is linearly presented one has the following
result:

\begin{proposition}[{\cite[Proposition~2.3~(b)]{EisEtal}}]\label{prop_eisetal_nonsquarefree}
If $I$ is an ideal generated by monomials of degree two which is
linearly presented, let $I_{sq}$ be the edge ideal obtained from $I$
by removing its square generators. Then, $I$ has a linear resolution
\iff{} $I_{sq}$ has. Moreover, when this does not occur, the step of
the \mgfr{} where nonlinear syzygies first appear is the same for
$I$ and $I_{sq}$.
\end{proposition}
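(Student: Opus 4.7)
The plan is to reduce to the squarefree case via polarization, which preserves graded Betti numbers. Denote by $I^{\mathrm{pol}}$ the polarization of $I$: for each square generator $x_k^2\in I$, introduce a new variable $x_k'$ and replace $x_k^2$ by $x_k x_k'$. Since $\beta_{i,j}(I)=\beta_{i,j}(I^{\mathrm{pol}})$ for all $i,j$, it suffices to compare the edge ideals $I^{\mathrm{pol}}$ and $I_{sq}$ via Fr\"oberg's theorem and Theorem~\ref{thm_eisetal}. To this end, I would first describe $G(I^{\mathrm{pol}})^c$ in terms of $G(I_{sq})^c=G(I)^c$: on the original vertex set $\{x_1,\ldots,x_n\}$ the induced subgraphs of the two complements coincide, while each new vertex $x_k'$ is adjacent in $G(I^{\mathrm{pol}})^c$ to every other vertex except $x_k$ itself, since in $G(I^{\mathrm{pol}})$ the vertex $x_k'$ is joined only to $x_k$. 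Thus each $x_k'$ behaves as an \emph{almost universal} vertex of the complement.

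The heart of the argument is the following combinatorial claim: for every $r\geq 5$, the induced $r$-cycles of $G(I^{\mathrm{pol}})^c$ and of $G(I_{sq})^c$ are in natural bijection. Indeed, if an induced $r$-cycle $C$ of $G(I^{\mathrm{pol}})^c$ contained a new vertex $x_k'$, its two neighbors on $C$ would be distinct from $x_k$, while the remaining $r-3\geq 2$ vertices of $C$ would each have to be non-adjacent to $x_k'$ in $G(I^{\mathrm{pol}})^c$ so that $C$ stays induced. But $x_k$ is the only vertex non-adjacent to $x_k'$, giving $r-3\leq 1$, a contradiction; so $C$ lies on the original vertices. The converse inclusion is immediate since the induced subgraphs on these vertices coincide.

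Putting these ingredients together: $I^{\mathrm{pol}}$ is linearly presented, so by Theorem~\ref{thm_eisetal} the complement $G(I^{\mathrm{pol}})^c$ has no induced $4$-cycle, and its chordality amounts to the absence of induced cycles of length $\geq 5$. The combinatorial claim shows that this is equivalent to the analogous statement for $G(I_{sq})^c$, which also has no induced $4$-cycle (since $I_{sq}$ is itself linearly presented by Proposition~\ref{prop_linearpresent}), hence to the chordality of $G(I_{sq})^c$. Fr\"oberg's theorem then delivers the equivalence of linear resolution for $I$ and $I_{sq}$. When the resolution is nonlinear, Theorem~\ref{thm_eisetal} applied to the two edge ideals identifies the first step of nonlinear syzygies as $r-3$, where $r$ is the common smallest induced cycle length $\geq 5$ in the two complements. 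The main obstacle is the combinatorial claim, but the ``almost universal'' behaviour of each $x_k'$ in $G(I^{\mathrm{pol}})^c$ reduces it to the elementary counting argument given above, after which everything follows mechanically from Fr\"oberg's theorem, Theorem~\ref{thm_eisetal}, and the invariance of graded Betti numbers under polarization.
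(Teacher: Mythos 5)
Your proof is correct. Note that the paper does not actually prove this proposition directly: it is quoted as an external result from Eisenbud--Green--Hulek--Popescu, and then re-derived in the remark that follows Proposition~\ref{prop_eisetal_nonsquarefree} as a corollary of the full main Theorem~\ref{thm_main}. Your route is more direct and more elementary: you reduce everything to Fr\"oberg's theorem (Theorem~\ref{thm_froberg}) and Theorem~\ref{thm_eisetal} applied to the two edge ideals $I^{\mathrm{pol}}$ and $I_{sq}$, using the combinatorial observation that no induced cycle of length $\geq 5$ in the complement of the polarized graph can pass through a whisker tip. That observation is precisely Lemma~\ref{lem_cycles_and_whiskers} of the paper, which is used there inside the proof of Theorem~\ref{thm_lastthm}; so the crucial combinatorial insight is shared, but your argument bypasses the machinery of Theorem~\ref{thm_main} (Propositions~\ref{prop_example}, \ref{prop_ineq_betti}, Theorem~\ref{thm_shape_Betti}, etc.). One small point worth tightening: you write that linear presentation of $I^{\mathrm{pol}}$ gives, ``by Theorem~\ref{thm_eisetal}'', the absence of induced $4$-cycles in $G(I^{\mathrm{pol}})^c$, but Theorem~\ref{thm_eisetal} is only stated for a nonlinear resolution. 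The conclusion still holds (in the linear case $G(I^{\mathrm{pol}})^c$ is chordal by Fr\"oberg, so a fortiori has no induced $4$-cycle), and alternatively you could invoke the direct equivalence between $\beta_{1,4}\neq 0$ and the presence of an induced $4$-cycle in the complement of an edge graph (via Lemma~\ref{lem_katzman_with_loops} and the first bullet of the remark after Proposition~\ref{prop_eisetal_nonsquarefree}); the same remark applies to the parallel assertion for $G(I_{sq})^c$. With that caveat smoothed out, the proof stands.
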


\begin{remark}{\rm
Theorem~\ref{thm_main} contains Theorem~\ref{thm_eisetal} and
Propositions~\ref{prop_linearpresent} and
\ref{prop_eisetal_nonsquarefree}:
\begin{itemize}
\item
In order to recover Theorem~\ref{thm_eisetal}, observe that if $I$
is an edge ideal, $G(I)$ has an induced subgraph consisting of two
disjoint edges \iff{} $G(I)^c$ has an induced $4$-cycle. Note that
this claim is wrong if we do not assume that $I$ is squarefree.
\item
It is easy to check that $G(I)$ has no induced subgraph consisting
of two disjoint edges \iff{} $G(I)^\ast$ has diameter $\leq 2$ and
hence, Theorem~\ref{thm_main}~(\ref{item_in_mainthm_linpres})
contains Proposition~\ref{prop_linearpresent}.
\item
Finally, observe that if we assume that $G(I)$ has no induced
subgraph consisting of two disjoint edges, i.e., that $I$ is
linearly presented by
Theorem~\ref{thm_main}~(\ref{item_in_mainthm_linpres}), then the
other combinatorial conditions in Theorem~\ref{thm_main} and also
the value of $i_3$ in
Theorem~\ref{thm_main}~(\ref{item_in_mainthm_nonlinpres}) only
depend on the graph $G(I)^c=G(I_{sq})^c$ and hence,
Proposition~\ref{prop_eisetal_nonsquarefree} follows.
\end{itemize}
 }\end{remark}

\section{An example where the whole Betti diagram is determined}\label{section_example}

We focus in this section on the case where $I$ is the edge ideal
generated by all the squarefree monomials of degree two in the
variables $x_1,\ldots,x_n$ except $x_1x_2$, $x_2x_3,\ \ldots$,
$x_nx_1$. Then, $G(I)^c$ is an $n$-cycle. In
\cite[Example~2.2]{EisEtal}, it is shown that the syzygies of $I$
are linear in steps $0,\ldots,n-4$ and that $\beta_{n-3,n}=1$. The
whole resolution of $I$ is given by the following result:

\begin{proposition}\label{prop_example}
Given $n\geq 4$, if $I\subset R:=\kx$ is an edge ideal such that
$G(I)^c$ is a cycle with $n$ vertices, the \mgfr{} of $I$ is
$$
0\lar R(-n)\lar R(-n+2)^{\beta_{n-4}}\lar\cdots\lar
R(-2)^{\beta_{0}}\lar I\lar 0
$$
where for all $i$, $0\leq i\leq n-4$,
$\beta_{i}:=n\frac{i+1}{n-i-2}\binom{n-2}{i+2}$.
\end{proposition}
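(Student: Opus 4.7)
\smallskip
\emph{Proof plan.} The plan is to compute all graded Betti numbers of $I$ simultaneously via Hochster's formula. First, identify $I$ as the Stanley--Reisner ideal $I_{\Delta}$ of the simplicial complex $\Delta$ whose faces are the cliques of $G(I)^c=C_n$; since $C_n$ contains no triangle when $n\geq 4$, $\Delta$ is exactly the $1$-dimensional simplicial complex $C_n$ itself. Hochster's formula then reads
\[
\beta_{i,j}(I) \,=\, \sum_{W\subset \{1,\ldots,n\},\ \vert W\vert=j}\dim_K \tilde{H}_{j-i-2}(\Delta_W;K),
\]
so the problem reduces to determining the reduced homology of each induced subcomplex $\Delta_W$.

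Two cases occur. If $W=\{1,\ldots,n\}$, then $\Delta_W=C_n$ is homotopy equivalent to a circle; its only nonzero reduced homology is $\tilde{H}_1(C_n)\cong K$, contributing exactly one nonlinear Betti number $\beta_{n-3,n}(I)=1$, and this immediately forces $\beta_{i,j}(I)=0$ for every $(i,j)$ with $j>i+2$ and $(i,j)\neq(n-3,n)$. If $W\subsetneq\{1,\ldots,n\}$, then $\Delta_W$ is an induced subgraph of $C_n$, hence a disjoint union of paths (possibly trivial or empty); the only possibly nonzero reduced homology is $\tilde{H}_0(\Delta_W)$, of dimension $c(W)-1$ where $c(W)$ is the number of connected components. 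Consequently
\[
\beta_{i,i+2}(I) \,=\, \sum_{\vert W\vert=i+2}\bigl(c(W)-1\bigr)
\]
for $0\leq i\leq n-4$, the possible extra term with $W=\{1,\ldots,n\}$ being inert since $c(C_n)=1$.

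The remaining step is a straightforward combinatorial count on the cycle. Viewing $W$ cyclically on $C_n$, $c(W)$ equals the number of vertices $v\in W$ whose cyclic predecessor $v-1$ lies outside $W$; interchanging the order of summation yields $\sum_{\vert W\vert=k}c(W)=n\binom{n-2}{k-1}$, so
\[
\beta_{i,i+2}(I) \,=\, n\binom{n-2}{i+1}-\binom{n}{i+2},
\]
and a routine manipulation of binomial coefficients converts this into the stated form $n\frac{i+1}{n-i-2}\binom{n-2}{i+2}$. I expect the main obstacle to be purely bookkeeping: one must apply Hochster's formula correctly, recognize the homotopy type of $\Delta_W$ in each case, and execute the cyclic counting cleanly; no deeper difficulty arises, and the length of the resolution follows at once from the vanishing statements above.
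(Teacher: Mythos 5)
Your proposal is correct, and it diverges from the paper's argument in a genuinely interesting way in the combinatorial half. The two proofs share the same first step: via Hochster's formula, reduce to computing the reduced homology of the induced subcomplexes of $\Delta = C_n$, observe that the full cycle contributes $\beta_{n-3,n}=1$ and nothing else outside the linear strand, and reduce the linear strand to the sum $\sum_{|W|=i+2}(c(W)-1)$ where $c(W)$ is the number of connected components of the induced subgraph on $W$. Where you part ways is the evaluation of this sum. The paper stratifies by the number of components $k$, proves a lemma counting induced subgraphs of an $n$-cycle with $i$ vertices and $k$ components (Lemma~\ref{lem_numero_subgrafos_en_ciclo}), and then needs a second, generating-function lemma (Lemma~\ref{lem_numeros_combinatorios}) to collapse the resulting weighted sum. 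You instead compute $\sum_{|W|=k}c(W)$ directly by double counting: $c(W)$ is the number of vertices $v\in W$ whose cyclic predecessor is not in $W$, so interchanging summation gives $n\binom{n-2}{k-1}$ immediately, whence $\beta_{i,i+2}=n\binom{n-2}{i+1}-\binom{n}{i+2}$; the identity $(i+2)(n-i-2)-(n-1)=(i+1)(n-i-3)$ then converts this to the stated form. Your route bypasses both of the paper's auxiliary lemmas and is noticeably shorter and more elementary; the paper's approach has the minor side benefit of producing the finer statistic $N(i+2,k)$ along the way, though that is not used elsewhere. One small note: since you restrict to $0\le i\le n-4$, the set $W=\{1,\ldots,n\}$ never appears in the sum $\sum_{|W|=i+2}(c(W)-1)$, so the aside about that term being inert is unnecessary (though harmless).
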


\begin{remark}{\rm
Using the Auslander-Buchsbaum formula, the above result implies that
$R/I$ is a Gorenstein ring of dimension two. The well-know symmetry
of the Betti numbers when $R/I$ is Gorenstein can be observed
checking easily in our formula that $\beta_{n-4-i}=\beta_i$ for all
$i$, $0\leq i\leq n-4$.
 }\end{remark}

\medskip
In order to prove Proposition~\ref{prop_example}, we use a special
case of Hochster's formula (see \cite{Hochster}) for an edge ideal
as stated in \cite[Proposition~1.2]{RothVantuyl}: denoting by
$V:=\{(1),\ldots,(n)\}$ the vertex set of $G(I)$,
\begin{equation}\label{eq_vantuyl_1}
\beta_{i,j}=
 \sum_{S\subseteq V\,;\ |S|=j}{\rm
dim}_K\tilde{H}_{j-i-2}(\Delta(G_S^c),K)\ ,\quad\forall i,j\geq 0\,,
\end{equation}
where $G_S$ denotes the induced subgraph of $G(I)$ on the vertex set
$S$, $G_S^c$ is its complement (in the vertex set $S$), and
$\Delta(G_S^c)$ is the clique complex of $G_S^c$. Note that
$\Delta(G_S^c)$ is a subcomplex of $\Delta:=\Delta(G(I)^c)$, the
clique complex of $G(I)^c$. Indeed, $\Delta(G_S^c)$ coincides with
$\Delta_S$, the subcomplex of $\Delta$ whose vertex set is $S$.

\medskip
Since $G(I)^c$ is a cycle, the only subcomplex of $\Delta$ with
nontrivial homology in degree $>0$ is $\Delta$ itself which reduced
homology is $K$ and is concentrated in degree 1. Thus,
$\beta_{n-3,n}=1$ and $\beta_{i,j}=0$ for any other $i,j$ such that
$j>i+2$.

\medskip
One can now determine the first row of the Betti diagram using the
formulation of (\ref{eq_vantuyl_1}) given in
\cite[Proposition~2.1]{RothVantuyl} when $j=i+2$:
\begin{equation}\label{eq_vantuyl_2}
\beta_{i,i+2}=
 \sum_{S\subseteq V\,;\ |S|=i+2}(\#\comp{G_S^c}-1)\ ,\quad\forall i\geq
 0\,.
\end{equation}
Subgraphs with one component have no contribution in
(\ref{eq_vantuyl_2}) so $\beta_{i,i+2}=0$ if $i+2\geq n-1$. Since
the number $k$ of components of $G_S^c$ satisfies $1\leq k\leq i+2$
for all $S\subseteq V$ with $|S|=i+2$, one has that for all $i\geq
0$ such that $i+2<n-1$,
\begin{equation}\label{eq_vantuyl_3}
\beta_{i,i+2}
 =
 \sum_{k=2}^{i+2}(\sum_{
 {\scriptsize
 \begin{array}{c}
 S\subseteq V\,;\\ |S|=i+2\ {\rm and}\\
 \#\comp{G_S^c}=k
 \end{array}
 }}(k-1))
 =
 \sum_{k=2}^{i+2}(k-1)\,N(i+2,k)
\end{equation}
where $N(i+2,k)$ is the number of induced subgraphs of $G_S^c$ with
$i+2$ vertices and $k$ components. We need the following technical
lemma:

\begin{lemma}\label{lem_numero_subgrafos_en_ciclo}
Let $i,k,n$ be integers such that $0<k\leq i<n$ and let $C$ be an
$n$-cycle. Then, the number of induced subgraphs of $C$ with $i$
vertices and $k$ components is
$\frac{n}{k}\binom{i-1}{k-1}\binom{n-i-1}{k-1}$.
\end{lemma}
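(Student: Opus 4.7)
The plan is to prove the formula by a double counting argument. An induced subgraph of $C$ on $i$ vertices corresponds to a choice of a subset $S$ of $V(C)$ with $|S|=i$, and since $i<n$ its connected components are precisely the maximal arcs of consecutive chosen vertices along the cycle. Having exactly $k$ components then means that $S$ decomposes cyclically into $k$ arcs of consecutive chosen vertices, alternating with $k$ arcs (``gaps'') of consecutive unchosen vertices; the arc lengths $a_1,\ldots,a_k$ are positive integers summing to $i$, and the gap lengths $b_1,\ldots,b_k$ are positive integers summing to $n-i$.

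Next I would count the set $\mathcal{P}$ of pairs $(S,v)$ where $S$ is a subset of the above kind and $v$ is the starting vertex (say, in the clockwise direction) of one of its $k$ arcs. For any fixed $v\in V(C)$, requiring $v$ to be the start of an arc of $S$ yields a bijection between such $S$'s and pairs consisting of a composition $(a_1,\ldots,a_k)$ of $i$ into $k$ positive parts and a composition $(b_1,\ldots,b_k)$ of $n-i$ into $k$ positive parts, obtained by reading off arc and gap lengths clockwise starting from $v$. Since there are $\binom{i-1}{k-1}$ and $\binom{n-i-1}{k-1}$ such compositions respectively, summing over the $n$ choices of $v$ gives $|\mathcal{P}|=n\binom{i-1}{k-1}\binom{n-i-1}{k-1}$.

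On the other hand, each admissible subset $S$ contributes exactly $k$ pairs to $\mathcal{P}$, one per arc-start, so the number of such subsets equals $|\mathcal{P}|/k$, which is the claimed formula. I do not anticipate a serious obstacle here; the only delicate point is the hypothesis $i<n$, which guarantees the existence of at least one gap and ensures that the components of $S$ are honest paths rather than the full cycle, so that the arc/gap bookkeeping is well defined and each component has a unique clockwise ``starting vertex''.
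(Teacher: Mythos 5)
Your proof is correct and is essentially the same double-counting argument as the paper's. The paper counts the set of pairs consisting of a $0/1$-vector $w$ of length $n$ (starting with $1$, ending with $0$, with $i$ ones in $k$ blocks) together with a choice of which vertex of $C$ receives the first entry; this is exactly your set $\mathcal{P}$ of pairs $(S,v)$ with $v$ an arc-start of $S$, and both proofs observe it has size $n\binom{i-1}{k-1}\binom{n-i-1}{k-1}$ and that each admissible $S$ arises exactly $k$ times.
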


\begin{proof}
Let $V:=\{(1),\ldots,(n)\}$ be the vertex set of $C$. Given a subset
$S$ of $V$, the induced subgraph $C_S$ of $C$ with vertex set $S$
can be represented by a vector $w_S$ of length $n$ whose $\ell$th
entry is $1$ if $(\ell)\in S$ and $0$ otherwise. From now on, we
identify the induced subgraph $C_S$ of $C$ to the vector $w_S$.

\smallskip
This identification can be used to compute the number of induced
subgraphs of $C$ with $i$ vertices and $k$ components. Indeed, the
number $i$ of vertices in $w_S$ is the number of its nonzero
entries, and the number $k$ of components of $w_S$ can easily be
related to the number of blocks of nonzero entries in $w_S$. In
order to avoid distinguishing cases as when the vector $w_S$
starts/ends with $1$/$0$, we make an easy observation. Consider the
set $W$ of vectors $w$ of length $n$ with entries 0 and 1, whose
first entry is 1 and last entry is 0, with $i$ nonzero entries and
$k$ blocks of nonzero entries (hence $k$ blocks of zero entries). To
each $w$ in $W$, we can associate $n$ subgraphs of $C$ with $i$
vertices and $k$ components assigning to the first entry of $w$ one
of the vertices in $\{(1),\ldots,(n)\}$. Conversely, each induced
subgraph $w_S$ of $C$ with $i$ vertices and $k$ components always
comes from $k$ vectors $w$ in $W$ depending on which of the blocks
of nonzero entries of $w_S$ is the first block of $w$. This implies
that the number of induced subgraphs of $C$ with $i$ vertices and
$k$ components is
 $\frac{n}{k}\times |W|$.
It is an easy exercise to show that the number of elements in $W$ is
equal to $\binom{i-1}{k-1}\binom{n-i-1}{k-1}$ and the result
follows.
\end{proof}

\medskip
Applying Lemma~\ref{lem_numero_subgrafos_en_ciclo} in
(\ref{eq_vantuyl_3}), one gets that, for all $i<n-3$,
$$
\beta_{i,i+2}
 =
\sum_{k=2}^{i+2}(k-1) \frac{n}{k}\binom{i+1}{k-1}\binom{n-i-3}{k-1}
 =
 n\sum_{k=1}^{i+1}
 \frac{k}{k+1}\binom{i+1}{k}\binom{n-i-3}{k}
$$
and Proposition~\ref{prop_example} now follows applying the
following combinatorial lemma to $m=n-2$ and $a=i+1$:

\begin{lemma}\label{lem_numeros_combinatorios}
For any two integers $m$ and $a$ such that $1\leq a<m$,
$$
\sum_{k=1}^a\frac{k}{k+1}\binom{m-a}{k}\binom{a}{k}=\frac{a}{m-a+1}\binom{m}{a+1}\
.
$$
\end{lemma}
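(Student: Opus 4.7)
The plan is to manipulate the left-hand side using standard binomial identities. The first idea is to split the coefficient as $\frac{k}{k+1} = 1 - \frac{1}{k+1}$, which decomposes the sum into $\Sigma_1 - \Sigma_2$, where
$$\Sigma_1 := \sum_{k=1}^a \binom{m-a}{k}\binom{a}{k} \quad \text{and} \quad \Sigma_2 := \sum_{k=1}^a \frac{1}{k+1}\binom{m-a}{k}\binom{a}{k}.$$
Both sums can be evaluated in closed form via Vandermonde's convolution.

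For $\Sigma_1$, rewriting $\binom{a}{k}=\binom{a}{a-k}$ and applying Vandermonde (then subtracting the $k=0$ contribution) yields $\Sigma_1 = \binom{m}{a}-1$. For $\Sigma_2$, the key trick is the absorption identity $\frac{1}{k+1}\binom{a}{k} = \frac{1}{a+1}\binom{a+1}{k+1}$. After substituting, reindexing with $j=k+1$, and applying Vandermonde once more (now summing $\binom{m-a}{j-1}\binom{a+1}{j}$), I would obtain $\Sigma_2 = \frac{1}{a+1}\binom{m+1}{a} - 1$, the $-1$ accounting for the $j=1$ boundary term.

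Subtracting gives $\Sigma_1 - \Sigma_2 = \binom{m}{a} - \frac{1}{a+1}\binom{m+1}{a}$. To match the right-hand side, I would rewrite $\frac{1}{a+1}\binom{m+1}{a} = \frac{1}{m-a+1}\binom{m+1}{a+1}$, apply Pascal's rule $\binom{m+1}{a+1} = \binom{m}{a} + \binom{m}{a+1}$, and then use the elementary identity $(m-a)\binom{m}{a} = (a+1)\binom{m}{a+1}$ to collapse the numerator. The final expression simplifies to $\frac{a}{m-a+1}\binom{m}{a+1}$, as required.

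The main obstacle is just careful bookkeeping with indices and boundary terms of the Vandermonde sums; no conceptual difficulty arises. Alternative routes such as induction on $a$ (with $m$ as a parameter) or a combinatorial double-counting argument are conceivable, but the asymmetric form of the right-hand side makes the algebraic route via the splitting $\frac{k}{k+1} = 1 - \frac{1}{k+1}$ and two applications of Vandermonde's identity the cleanest.
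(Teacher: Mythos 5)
Your proof is correct, but it takes a genuinely different route from the paper's. You split $\frac{k}{k+1}=1-\frac{1}{k+1}$ and evaluate each piece by a Vandermonde convolution: the first sum after reflecting $\binom{a}{k}=\binom{a}{a-k}$, and the second after absorbing the $\frac{1}{k+1}$ via $\frac{1}{k+1}\binom{a}{k}=\frac{1}{a+1}\binom{a+1}{k+1}$, reindexing, and applying Vandermonde again; you then close with Pascal's rule and the identity $(m-a)\binom{m}{a}=(a+1)\binom{m}{a+1}$. The paper instead uses a generating-function argument: it sets $F:=(1+X)^a$ and $g:=(1+X)^{m-a}$, forms $f:=F'$ (the derivative supplying the factor $k$) and $G:=\int_0^X g$ (the antiderivative supplying the factor $\frac{1}{k+1}$), computes $fG$ both in closed form and as a Cauchy product, and compares the coefficient of $X^{a+1}$. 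Both arguments ultimately rest on the same convolution, but the paper packages the weight $\frac{k}{k+1}$ in a single step by pairing a derivative with an antiderivative, avoiding the decomposition into two sums and the two boundary-term corrections your version requires; your version, in exchange, is more elementary, needing nothing beyond standard binomial identities. I verified the boundary terms in your computation — $\Sigma_1=\binom{m}{a}-1$ (dropping $k=0$) and $\Sigma_2=\frac{1}{a+1}\binom{m+1}{a}-1$ (dropping $j=1$, which contributes $\frac{a+1}{a+1}=1$) — and they cancel cleanly, so the bookkeeping goes through as you describe.
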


\begin{proof}
Let $F$ and $g$ be the two polynomials in $\Q[X]$ defined as
follows:
$$
F:=(1+X)^a=\sum_{k=0}^{a}\binom{a}{k}X^k
 \quad\hbox{and}\quad
g:=(1+X)^{m-a}=\sum_{k=0}^{m-a}\binom{m-a}{k}X^k\ .
$$
Set $f:=F'$ and $G:=\int_0^xg(u)du$. Then
 $\displaystyle{
f=a(1+X)^{a-1}=\sum_{k=1}^a k \binom{a}{k}X^{k-1}=\sum_{k=1}^a k
\binom{a}{k}X^{a-k}
 }$
where the last equality follows from the fact that $X^{k-1}$ and
$X^{a-k}$ have the same coefficients in the polynomial $\sum_{k=1}^a
k \binom{a}{k}X^{k-1}$ because
$k\binom{a}{k}=k\frac{a!}{k!(a-k)!}=a\binom{a-1}{k-1}$ and
$(a-k+1)\binom{a}{a-k+1}=(a-k+1)\frac{a!}{(a-k+1)!(k-1)!}=a\binom{a-1}{k-1}$.
On the other hand,
 $\displaystyle{
G=\frac{(1+X)^{m-a+1}-1}{m-a+1}=}$ $\displaystyle{\sum_{k=0}^{m-a}
\frac{1}{k+1} \binom{m-a}{k}X^{k+1}
 }$.

 \smallskip
Expressing the polynomial $f G$ in two different ways, one gets that
$$
\frac{a((1+X)^{m}-(1+X)^{a-1})}{m-a+1}=(\sum_{k=1}^a k
\binom{a}{k}X^{a-k})(\sum_{k=0}^{m-a} \frac{1}{k+1}
\binom{m-a}{k}X^{k+1})
$$
and the expected formula now follows determining the coefficient of
$X^{a+1}$ in both sides of this equality.
\end{proof}

\section{Nonlinear syzygies of smallest degree}\label{section_results}

In this section, we compute the number of nonlinear syzygies of $I$
of smallest degree when $I$ does not have a linear resolution.
Observe that nonlinear syzygies have degree at least $4$. The case
where $I$ has nonlinear syzygies of degree $4$ happens to be special
when $I$ has square generators as we shall observe in
Remark~\ref{rk_ris4_notsquarefree}. So let us start characterizing
when $I$ has nonlinear syzygies of degree $4$ and computing the
number of such syzygies when this occurs. It is a direct consequence
of the following result that has be proved by Katzman in
\cite[Lemma~2.2]{Katzman} when $I$ is squarefree.

\begin{lemma}\label{lem_katzman_with_loops}
Let $I$ be an ideal generated by monomials of degree two. For all
$i\geq 0$, $\beta_{i,2(i+1)}$ is the number of induced subgraphs of
$G(I)$ consisting of $i+1$ disjoint edges.
\end{lemma}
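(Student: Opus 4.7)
\smallskip\noindent\textbf{Proof plan.} The plan is to reduce to the squarefree case, which is settled by \cite[Lemma~2.2]{Katzman}, via polarization. For each square generator $x_a^2$ of $I$, I would introduce a new variable $y_a$ and replace $x_a^2$ by $x_ay_a$, obtaining a squarefree ideal $\tilde I$ generated in degree two in a larger polynomial ring. Since polarization preserves graded Betti numbers, $\beta_{i,j}(I)=\beta_{i,j}(\tilde I)$ for all $i,j$. The graph $G(\tilde I)$ is obtained from $G(I)$ by replacing each loop $(v_a,v_a)$ with an edge $(v_a,v'_a)$, where $v'_a$ denotes the new vertex associated to $y_a$; no other adjacencies are created.

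By Katzman's lemma applied to $\tilde I$, the number $\beta_{i,2(i+1)}(\tilde I)$ equals the number of induced subgraphs of $G(\tilde I)$ consisting of $i+1$ pairwise vertex-disjoint edges. It therefore suffices to exhibit a bijection between these induced matchings of $G(\tilde I)$ and the induced subgraphs of $G(I)$ consisting of $i+1$ disjoint edges, where a loop is counted as an edge occupying its single vertex. The key observation is that in $G(\tilde I)$ each polarization vertex $v'_a$ has $v_a$ as its only neighbor, so if the vertex set $T$ of such an induced $(i+1)$-matching $M$ contains $v'_a$, the edge $(v_a,v'_a)$ must itself lie in $M$. One can thus decompose $M$ canonically into edges of the form $(v_a,v'_a)$, which correspond bijectively to loops of $G(I)$, and edges between two original vertices, which correspond to non-loops of $G(I)$; together they yield $i+1$ pairwise vertex-disjoint edges of $G(I)$.

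The main thing to verify, and the main potential obstacle, is that the ``induced'' condition in $G(\tilde I)$ transports correctly across the bijection. Any forbidden extra edge of $G(\tilde I)$ joining two vertices of $T$ must have both endpoints original, since polarization vertices outside $T$ are irrelevant while each polarization vertex inside $T$ has its unique incident edge already in $M$. Hence ruling out such extra edges amounts to ruling out non-loop edges of $G(I)$ between two spanned original vertices that are not among the chosen edges; crucially, a loop of $G(I)$ at a spanned vertex that is not itself one of the chosen edges produces no obstruction, because the associated polarization vertex is absent from $T$. Once the notion of ``induced subgraph of $G(I)$ consisting of $i+1$ disjoint edges'' is fixed in this natural manner, the correspondence is plainly reversible, and the equality $\beta_{i,2(i+1)}(I)=\beta_{i,2(i+1)}(\tilde I)$ delivers the claimed count.
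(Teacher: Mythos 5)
Your proof is correct and follows essentially the same route as the paper: polarize the square generators, note that each loop becomes a whisker, establish a bijection between induced $(i+1)$-matchings of the polarized graph and induced subgraphs of $G(I)$ consisting of $i+1$ disjoint edges, and invoke Katzman's lemma together with the invariance of graded Betti numbers under polarization. The paper states the key bijection more tersely (just asserting that two disjoint edges form an induced subgraph in one graph iff they do in the other), whereas you helpfully spell out why the induced condition transports---in particular that a latent loop at a spanned vertex is harmless, which is consistent with the paper's definition of ``induced subgraph'' as constraining only edges between \emph{distinct} vertices.
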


\begin{proof}
If $I$ is not squarefree, one can assume without loss of generality
that the square generators of $I$ are $x_1^2,\ldots,x_p^2$ for some
$p\leq n$. The polarization $I'$ of $I$ is the edge ideal obtained
by substituting, for all $j\in\{1,\ldots,p\}$, $x_{j}x_{n+j}$ for
$x_{j}^2$ where $x_{n+1},\ldots,x_{n+p}$ are $p$ new variables. Note
that the graph $G(I')$ associated to $I'$ is obtained from $G(I)$ by
substituting each loop $(j,j)$ in $G(I)$ ($1\leq j\leq p$) by the
whisker $(j,n+j)$, and two disjoint edges of $G(I')$ form an induced
subgraph of $G(I')$ \iff{} the corresponding edges of $G(I)$ form an
induced subgraph of $G(I)$. The result now follows applying
\cite[Lemma~2.2]{Katzman} to the edge ideal $I'$ and using the
well-known fact that the graded Betti numbers of $I$ (as
$K[x_1,\ldots, x_n]$-module) and $I'$ (as $K[x_1,\ldots,
x_n,x_{n+1},\ldots,x_{n+p}]$-module) are the same.
\end{proof}

\begin{corollary}\label{cor_katzman}
If $I$ is an ideal generated by monomials of degree two,
$\beta_{1,4}\neq 0$ \iff{} $G(I)$ has at least one induced subgraph
consisting of two disjoint edges. When this occurs,
$$
\beta_{1,4}= \#\{\hbox{induced induced subgraphs of $G(I)$
consisting of 2 disjoint edges}\}\,.
$$
\end{corollary}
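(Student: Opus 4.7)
The plan is extremely short: this is simply the specialization of Lemma~\ref{lem_katzman_with_loops} to the case $i=1$. Indeed, setting $i=1$ in the formula $\beta_{i,2(i+1)}=\#\{\text{induced subgraphs of }G(I)\text{ consisting of }i+1\text{ disjoint edges}\}$ yields $\beta_{1,4}=\#\{\text{induced subgraphs of }G(I)\text{ consisting of }2\text{ disjoint edges}\}$, from which both the equivalence $\beta_{1,4}\neq 0 \iffm G(I)$ has at least one such induced subgraph, and the explicit count when this holds, follow immediately.

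Since Lemma~\ref{lem_katzman_with_loops} has already been established (via polarization from Katzman's squarefree result), there is no obstacle. The only step is to record the substitution $i=1$ and to translate the numerical conclusion into the ``iff'' formulation used in the statement of the corollary. I would write it as a two-line proof that points to the lemma and specializes $i$.
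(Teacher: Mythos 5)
Your proposal is correct and is exactly the paper's approach: the paper states Corollary~\ref{cor_katzman} as a direct consequence of Lemma~\ref{lem_katzman_with_loops} (the polarization-extended version of Katzman's result), with no further argument needed beyond the specialization $i=1$.
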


\medskip
We can now start to prove Theorem~\ref{thm_lastthm} that gives the
number of nonlinear syzygies of $I$ of smallest degree (except in
the already studied case where $I$ is not squarefree and has
nonlinear syzygies of degree 4). We shall use essentially
Theorem~\ref{thm_froberg}, Proposition~\ref{prop_example} and a
result by Gasharov, Hibi and Peeva that we recall below. For all
$\underline{s}=(s_1,\ldots,s_n)\in\N^n$, set
$\underline{x}^{\underline{s}}:=x_1^{s_1}\cdots x_n^{s_n}\in R$.
Consider a minimal $\N^n$-graded free resolution of $I$, ${\mathbf
F}$, and given any monomial $\underline{x}^{\underline{s}}$ in $R$,
denote by ${\mathbf F}_{\underline{s}}$ the subcomplex of ${\mathbf
F}$ that is generated by the $\N^n$-homogeneous basis elements of
degrees dividing $\underline{x}^{\underline{s}}$.

\begin{theorem}[{\cite[Theorem~2.1]{PeevaEtal}}]\label{thm_peevaetal}
For any monomial $\underline{x}^{\underline{s}}\in R$, ${\mathbf
F}_{\underline{s}}$ is a minimal $\N^n$-graded free resolution of
$I_{\underline{s}}$, the monomial ideal generated by $\{f_i;\ f_i\
{\rm divides}\ \underline{x}^{\underline{s}}\}$.
\end{theorem}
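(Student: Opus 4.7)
The plan is to establish three properties of ${\mathbf F}_{\underline{s}}$: that it is a well-defined subcomplex of ${\mathbf F}$, that it computes a resolution of $I_{\underline{s}}$ in positive degrees, and that it remains minimal.

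First I would verify that ${\mathbf F}_{\underline{s}}$ is closed under the differential of ${\mathbf F}$. Since ${\mathbf F}$ is $\N^n$-graded, any basis element $e_i$ of multidegree $\underline{s}_i$ satisfies $d(e_i)=\sum_j c_{ij}\,e_j$ where each coefficient $c_{ij}\in R$ is a scalar multiple of the monomial $\underline{x}^{\underline{s}_i-\underline{s}_j}$, which forces $\underline{s}_j\leq \underline{s}_i$ coordinatewise. Hence if $\underline{s}_i$ divides $\underline{x}^{\underline{s}}$, so does every $\underline{s}_j$ appearing in $d(e_i)$, and $d$ restricts to ${\mathbf F}_{\underline{s}}$.

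The core of the proof is to compute the homology of ${\mathbf F}_{\underline{s}}$ multidegree by multidegree. Given $\underline{t}\in\N^n$, set $\underline{t}':=\min(\underline{t},\underline{s})$ coordinatewise. A $K$-basis of the multidegree-$\underline{t}'$ part $({\mathbf F})_{\underline{t}'}$ is given by $\underline{x}^{\underline{t}'-\underline{s}_i}e_i$ for those $e_i$ with $\underline{s}_i\leq \underline{t}'$, and the same index set parametrizes a $K$-basis of $({\mathbf F}_{\underline{s}})_{\underline{t}}$ via $\underline{x}^{\underline{t}-\underline{s}_i}e_i$. I expect that multiplication by $\underline{x}^{\underline{t}-\underline{t}'}$ induces a chain-complex isomorphism $({\mathbf F})_{\underline{t}'}\cong ({\mathbf F}_{\underline{s}})_{\underline{t}}$; this boils down to the identity $\underline{x}^{\underline{t}-\underline{s}_i}\cdot c_{ij}=\underline{x}^{\underline{t}-\underline{t}'}\cdot\underline{x}^{\underline{t}'-\underline{s}_i}\cdot c_{ij}$ together with the observation that every index $j$ appearing in $d(e_i)$ satisfies $\underline{s}_j\leq\underline{s}_i\leq\underline{t}'$. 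Since ${\mathbf F}$ is exact in positive homological degrees, so is ${\mathbf F}_{\underline{s}}$. In degree $0$ the augmentation sends the basis element labelled by $f_i$ to $f_i$, so after restriction we obtain a surjection onto the ideal generated by those $f_i$ with $f_i\mid\underline{x}^{\underline{s}}$, which is exactly $I_{\underline{s}}$.

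Minimality is then immediate: the entries of the differentials of ${\mathbf F}$ all lie in the maximal ideal $(x_1,\ldots,x_n)$ by assumption, and ${\mathbf F}_{\underline{s}}$ inherits the same matrices restricted to a sub-basis, so its differentials also have entries in the maximal ideal. The only genuinely technical step is the bookkeeping identifying $({\mathbf F}_{\underline{s}})_{\underline{t}}$ with $({\mathbf F})_{\underline{t}'}$; once that identification is set up correctly, the rest flows directly from the $\N^n$-graded structure of the minimal resolution.
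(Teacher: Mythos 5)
The paper does not prove this theorem; it is cited directly from Gasharov, Hibi and Peeva \cite[Theorem~2.1]{PeevaEtal} and used as a black box, so there is no in-paper proof to compare against. Your argument is correct and is the standard degree-by-degree argument for this kind of restriction statement. The closure of ${\mathbf F}_{\underline{s}}$ under the differential follows exactly as you say from $\N^n$-homogeneity, the isomorphism $({\mathbf F}_{\underline{s}})_{\underline{t}}\cong({\mathbf F})_{\underline{t}'}$ with $\underline{t}'=\min(\underline{t},\underline{s})$ via multiplication by $\underline{x}^{\underline{t}-\underline{t}'}$ transfers exactness in each multidegree, and minimality is inherited because the differentials of ${\mathbf F}_{\underline{s}}$ are restrictions of those of ${\mathbf F}$.

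The one step worth spelling out a bit more is exactness at homological degree $0$ (i.e.\ that ${\mathbf F}_{\underline{s}}$ really resolves $I_{\underline{s}}$, not just some module surjecting onto it). You observe that the restricted augmentation maps $({\mathbf F}_{\underline{s}})_0$ onto $I_{\underline{s}}$, and its kernel certainly contains the image of $({\mathbf F}_{\underline{s}})_1$; to see these coincide, apply your multidegree isomorphism also in homological degree $0$ to get $H_0({\mathbf F}_{\underline{s}})_{\underline{t}}\cong H_0({\mathbf F})_{\underline{t}'}=I_{\underline{t}'}$, and note that $\underline{x}^{\underline{t}}\in I_{\underline{s}}$ if and only if $\underline{x}^{\underline{t}'}\in I$ (since $\underline{x}^{\underline{t}'}=\gcd(\underline{x}^{\underline{t}},\underline{x}^{\underline{s}})$), so that $\dim_K(I_{\underline{s}})_{\underline{t}}=\dim_K I_{\underline{t}'}$. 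The surjection $H_0({\mathbf F}_{\underline{s}})\to I_{\underline{s}}$ of $\N^n$-graded modules with matching $K$-dimensions in each multidegree is then an isomorphism. With that detail supplied, the proof is complete.
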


Together with Proposition~\ref{prop_example}, this result implies
the following result:

\begin{proposition}\label{prop_ineq_betti} Assume that $G(I)^c$ has an induced
$r$-cycle with $r\geq 4$, and consider the vertex set
$\{(i_1),\ldots,(i_r)\}$ of this cycle. Let $\underline{s}$ be the
element in $\N^n$ such that
$\underline{x}^{\underline{s}}=x_{i_1}\cdots x_{i_r}$. Then, the
multigraded Betti numbers $\beta_{i,\underline{s}}$ are equal to $0$
for all $i<r-3$, and $\beta_{r-3,\underline{s}}$ is equal to $1$. In
particular,
$$
\beta_{r-3,r}\geq\#\{\hbox{induced $r$-cycles in $G(I)^c$}\}\,.
$$
\end{proposition}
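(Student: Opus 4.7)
The plan is to combine two earlier ingredients: Theorem~\ref{thm_peevaetal}, which identifies $\beta_{i,\underline{s}}(I)$ with a multigraded Betti number of the restricted ideal $I_{\underline{s}}$, together with Proposition~\ref{prop_example}, which furnishes the full resolution when the associated complement is a cycle. Given these tools, the argument is essentially one of reduction and bookkeeping.

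First I would identify $I_{\underline{s}}$. Since $\underline{x}^{\underline{s}} = x_{i_1}\cdots x_{i_r}$ is squarefree, a generator of $I$ divides it precisely when it has the form $x_{i_a} x_{i_b}$ with $1 \leq a < b \leq r$ and $(i_a,i_b) \in E_{G(I)}$; in particular no loop generator contributes. Hence $I_{\underline{s}}$ is the edge ideal of the induced subgraph of $G(I)$ on $\{(i_1),\ldots,(i_r)\}$. The hypothesis that this vertex set carries an \emph{induced} $r$-cycle in $G(I)^c$ forces the edges of $G(I)^c$ among these vertices to be exactly the $r$ cycle edges, so inside the smaller polynomial ring $R' := K[x_{i_1},\ldots,x_{i_r}]$ the graph associated to $I_{\underline{s}}$ has complement equal to an $r$-cycle.

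Now Proposition~\ref{prop_example} applies to $I_{\underline{s}}$ and gives its minimal graded free resolution, whose length is $r-3$ and whose top module is a single copy of $R'(-r)$. The multidegree of this top generator is forced to be $\underline{s}$: its total degree is $r$ and its multidegree must be supported in $\{x_{i_1},\ldots,x_{i_r}\}$, so it can only be the squarefree product $x_{i_1}\cdots x_{i_r}$. For $i < r-3$ every basis element of that resolution has total degree $i+2 < r$, so none carries multidegree $\underline{s}$. Theorem~\ref{thm_peevaetal} then converts these facts into $\beta_{i,\underline{s}}(I) = 0$ for $i < r-3$ and $\beta_{r-3,\underline{s}}(I) = 1$.

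To deduce the global inequality I would observe that two distinct induced $r$-cycles of $G(I)^c$ must have distinct vertex sets: otherwise the induced subgraph of $G(I)^c$ on their common vertex set would contain the edges of both cycles, hence strictly more than $r$ edges, contradicting that each of them is induced. Consequently distinct induced $r$-cycles produce distinct squarefree multidegrees $\underline{s}$ of weight $r$, each contributing $\beta_{r-3,\underline{s}}(I) = 1$ to $\beta_{r-3,r}(I) = \sum_{|\underline{s}|=r}\beta_{r-3,\underline{s}}(I)$. The one point that needs a moment's care, and which I would therefore flag as the main technical step, is the identification of the multidegree of the unique top syzygy in Proposition~\ref{prop_example}; everything else is a direct application of the cited results.
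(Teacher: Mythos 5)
Your approach coincides with the paper's: reduce to $I_{\underline{s}}$ via Theorem~\ref{thm_peevaetal}, identify it with the edge ideal whose complement graph is an $r$-cycle, and invoke Proposition~\ref{prop_example}. The step you flag as the technical heart of the argument is indeed where the work lies, but the justification you offer does not close it. You assert that the top generator's multidegree ``must be supported in $\{x_{i_1},\ldots,x_{i_r}\}$'' and has total degree $r$, ``so it can only be'' $x_{i_1}\cdots x_{i_r}$. That inference is false as stated: $x_{i_1}^2 x_{i_2}\cdots x_{i_{r-1}}$ also has total degree $r$ and support contained in $\{x_{i_1},\ldots,x_{i_r}\}$, yet it is not $\underline{s}$. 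What you actually need is the standard fact that a minimal multigraded free resolution of a \emph{squarefree} monomial ideal has only squarefree multidegrees (an immediate consequence of the multigraded form of Hochster's formula, or of Theorem~\ref{thm_peevaetal} itself applied with the squarefree monomial $x_{i_1}\cdots x_{i_r}$). Once squarefreeness is in hand, a squarefree multidegree of weight $r$ in an $r$-variable ring is forced to be $(1,\ldots,1)=\underline{s}$, and your argument goes through. The paper itself leaves this point tacit rather than justifying it, so the omission is shared; but your proposed reason is incorrect and should be replaced by the squarefreeness observation.

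The remainder of your write-up is sound. The identification of $I_{\underline{s}}$ as the edge ideal of the induced subgraph on $\{(i_1),\ldots,(i_r)\}$, the vanishing of $\beta_{i,\underline{s}}$ for $i<r-3$ on total-degree grounds, and the final counting step via distinct vertex sets giving distinct multidegrees are all correct and match the paper's reasoning in spirit.
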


\begin{proof}
Applying Theorem~\ref{thm_peevaetal} to
$\underline{x}^{\underline{s}}=x_{i_1}\cdots x_{i_r}$, one gets that
the subcomplex ${\mathbf F}_{\underline{s}}$ of ${\mathbf F}$ is a
minimal $\N^n$-graded free resolution of $I_{\underline{s}}$. In
particular, the multigraded Betti number $\beta_{i,\underline{s}}$
coincides with the number of generators of multidegree
$\underline{s}$ in the $i$th step of the \mgfr{} of
$I_{\underline{s}}$ for all $i$. Now observe that
$I_{\underline{s}}\subset R$ is minimally generated by all the
square free monomials of degree two in $x_{i_1},\ldots,x_{i_r}$
except $r$ of them that correspond to the edges in the induced
$r$-cycle of $G(I)^c$. Since the minimal generators of
$I_{\underline{s}}$ only involve the variables
$x_{i_1},\ldots,x_{i_r}$, the multigraded Betti numbers of
$I_{\underline{s}}$ (as $R$-module) are the same as the multigraded
Betti numbers of $I_{\underline{s}}\cap K[x_{i_1},\ldots,x_{i_r}]$
(as $K[x_{i_1},\ldots,x_{i_r}]$-module), and the result follows from
Proposition~\ref{prop_example} where the resolution of
$I_{\underline{s}}\cap K[x_{i_1},\ldots,x_{i_r}]\subset
K[x_{i_1},\ldots,x_{i_r}]$ is described.
\end{proof}

\begin{remark}{\rm
Since we have described all the Betti numbers in
Proposition~\ref{prop_example}, the above argument carries some
additional information on the graded Betti numbers of $I$. Indeed,
for all $r\geq 4$ such that $G(I)^c$ has an induced $r$-cycle, and
for all $i$ such that $0\leq i\leq r-4$, $\beta_{i,i+2}\geq
r\frac{i+1}{r-i-2}\binom{r-2}{i+2}$.
 }\end{remark}

\begin{theorem}\label{thm_lastthm}
Assume that $I$ does not have a linear resolution, and let $r$ be
the smallest integer {\rm(}$\geq 4${\rm)} such that $\beta_{i,r}\neq
0$ for some $i\leq r-3$. If $r=4$, assume moreover that $I$ is
squarefree. Then, $r$ is the smallest integer such that $G(I)^c$ has
an induced $r$-cycle and
$$
\beta_{r-3,r}=\#\{\hbox{induced $r$-cycles in $G(I)^c$}\}\,.
$$
\end{theorem}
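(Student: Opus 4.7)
My plan is to combine the localization principle of Theorem~\ref{thm_peevaetal} with Fr\"oberg's Theorem~\ref{thm_froberg} and the explicit resolution computed in Proposition~\ref{prop_example}. Proposition~\ref{prop_ineq_betti} already supplies the inequality $\beta_{\rho-3,\rho}\geq\#\{\text{induced }\rho\text{-cycles in }G(I)^c\}$, where $\rho\geq 4$ denotes the shortest induced cycle length in $G(I)^c$, so two things remain: the vanishing $\beta_{i,r'}=0$ for all $i\leq r'-3$ and all $r'<\rho$ (forcing $r=\rho$), and the fact that no squarefree multidegrees other than those arising from $\rho$-cycles contribute to $\beta_{\rho-3,\rho}$ (so that equality holds in the count).

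I would first treat the squarefree case. Using the multigraded decomposition $\beta_{i,r'}(I)=\sum_{|\underline{s}|=r'}\beta_{i,\underline{s}}(I)$ over squarefree multidegrees $\underline{s}$ with support $S$, Theorem~\ref{thm_peevaetal} identifies each term with $\beta_{i,r'}(I_{\underline{s}})$, where $I_{\underline{s}}$ is the edge ideal of the induced subgraph $G_S$. Since $G_S^c$ is the restriction $G(I)^c|_S$, any induced cycle of $G_S^c$ is automatically induced in $G(I)^c$. For $|S|=r'<\rho$ this forces $G_S^c$ to be chordal, so Theorem~\ref{thm_froberg} yields a linear resolution for $I_{\underline{s}}$, whence $\beta_{i,r'}(I_{\underline{s}})=0$ whenever $i\leq r'-3$. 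For $|S|=\rho$ either $G_S^c$ is chordal (giving $0$), or it contains an induced $t$-cycle with $\rho\leq t\leq|S|=\rho$, which forces $t=\rho$ on all $\rho$ vertices of $S$; since any additional edge would chord this cycle, $G_S^c$ must coincide with the $\rho$-cycle itself, and Proposition~\ref{prop_example} then gives $\beta_{\rho-3,\rho}(I_{\underline{s}})=1$. Summing over $\underline{s}$ completes the squarefree case.

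For the non-squarefree case with $r\geq 5$, I would polarize. Let $I'$ be the edge ideal obtained by replacing each generator $x_j^2$ with $x_jx_{n+j}$ ($1\leq j\leq p$); polarization preserves graded Betti numbers, so it suffices to match induced $r$-cycles of $G(I')^c$ with those of $G(I)^c$. The complement $G(I')^c$ restricts to $G(I)^c$ on the original $n$ vertices, while the $p$ new vertices form a clique and $n+j$ is joined to every old vertex except $j$. A case analysis on the number $u$ of new vertices in an induced cycle of length $t\geq 5$ rules out every possibility except $u=0$: if $u\geq 3$, three mutually adjacent new vertices would have to be cycle-adjacent in pairs, impossible for $t\geq 4$; if $u=1$ or $u=2$, for each new vertex $n+k$ in the cycle, every old cycle vertex not cycle-adjacent to $n+k$ must equal $k$, but there are at least two such vertices when $t\geq 5$, violating distinctness. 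Hence for $r\geq 5$ the induced $r$-cycles of $G(I')^c$ correspond bijectively to the induced $r$-cycles of $G(I)^c$, and applying the squarefree case to $I'$ concludes the argument.

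The main obstacle I foresee is the polarization case analysis---rigorously ruling out the use of new whisker vertices in induced cycles of length $\geq 5$ in $G(I')^c$. This matters because $G(I')^c$ generally acquires additional induced $4$-cycles not coming from $G(I)^c$ (for instance, from a loop together with a disjoint edge, or from two unconnected loops in $G(I)$), which is precisely the phenomenon that forces the theorem to exclude non-squarefree $I$ when $r=4$. Verifying that these parasitic cycles disappear as soon as $r\geq 5$ is the key combinatorial ingredient.
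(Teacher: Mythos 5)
Your proposal is correct, and the underlying machinery matches the paper's: localization to the support of a multidegree via Theorem~\ref{thm_peevaetal}, Fr\"oberg's Theorem~\ref{thm_froberg} to detect induced cycles, Proposition~\ref{prop_example} for the exact Betti numbers of a cycle ideal, and, for non-squarefree input, polarization together with the observation that an induced cycle of length $\geq 5$ in a complement graph cannot pass through a degree-one (whisker) vertex --- which you re-derive inline and which is exactly the paper's Lemma~\ref{lem_cycles_and_whiskers}.

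Where you differ is in the organization of the non-squarefree case. The paper works multidegree by multidegree: for each $\underline{s}$ with $|\underline{s}|=r$ and $\beta_{i,\underline{s}}>0$ it shows directly that $\underline{s}$ must be squarefree, by assuming otherwise, restricting $I_{\underline{s}}$ to its support to obtain a small ideal $J$, polarizing $J$, applying Fr\"oberg there, and then invoking Lemma~\ref{lem_cycles_and_whiskers} and Proposition~\ref{prop_ineq_betti} to reach a contradiction. You instead polarize $I$ once, globally, observe that $G(I')^c$ restricts to $G(I)^c$ on the old vertices while the whisker vertices support no induced cycles of length $\geq 5$, and conclude that the relevant induced $r$-cycles of $G(I')^c$ and $G(I)^c$ are literally the same; this lets you apply the squarefree case to $I'$ wholesale. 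The two routes are logically equivalent, but your global reduction is arguably cleaner: it separates the homological part (handled entirely in the squarefree world) from the purely combinatorial comparison between $G(I)^c$ and $G(I')^c$, and it makes transparent exactly why the case $r=4$ must be excluded for non-squarefree $I$ (namely, that polarization may create induced $4$-cycles with whisker vertices). One small point worth stating explicitly in a final write-up: in your squarefree-case analysis you implicitly use that a squarefree monomial ideal has all its nonzero multigraded Betti numbers concentrated in squarefree multidegrees, so the decomposition $\beta_{i,r'}=\sum_{\underline{s}}\beta_{i,\underline{s}}$ really does range only over squarefree $\underline{s}$; this is standard but should be cited.
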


\begin{proof}
Consider $\underline{s}\in\N^n$ with $|\underline{s}|=r$ such that
$\beta_{i,\underline{s}}>0$ for some $i\leq r-3$.

\smallskip
We first assume that $\underline{x}^{\underline{s}}=x_{i_1}\cdots
x_{i_r}$ with $i_1,\ldots,i_r$ all different. Then the ideal
$I_{\underline{s}}$ is squarefree. Moreover, it has a nonlinear
resolution by Theorem~\ref{thm_peevaetal}. Applying
Theorem~\ref{thm_froberg} to the ideal $J:=I_{\underline{s}}\cap
K[x_{i_1},\ldots, x_{i_r}]$, one gets that the complement $G(J)^c$
of $G(J)$ (in the vertex set $\{(i_1),\ldots,(i_r)\}$) has an
induced $\ell$-cycle for some $\ell\leq r$. This provides an induced
$\ell$-cycle in $G(I)^c$. By Proposition~\ref{prop_ineq_betti},
$\beta_{\ell-3,\ell}>0$ and $\ell\geq r$ by minimality of $r$. Thus
$\ell=r$, and we have shown that if $\underline{s}\in\N^n$ is such
that $\beta_{i,\underline{s}}>0$ for some $i\leq r-3$ and
$\underline{x}^{\underline{s}}=x_{i_1}\cdots x_{i_r}$ is squarefree,
then $G(I)^c$ has an induced $r$-cycle with vertex set
$\{(i_1),\ldots,(i_r)\}$. By Proposition~\ref{prop_ineq_betti}, this
implies that $\beta_{i,\underline{s}}=0$ for all $i<r-3$ and
$\beta_{r-3,\underline{s}}=1$.

\smallskip
Assume now that the monomial $\underline{x}^{\underline{s}}$ is not
squarefree and let us see that this leads to a contradiction. If
$I_{\underline{s}}$ was squarefree, one could apply the same
argument as before and find an induced $\ell$-cycle in $G(I)^c$ with
$\ell\geq r$ by minimality of $r$. On the other hand, this cycle
should be supported on vertices corresponding to variables dividing
the monomial $\underline{x}^{\underline{s}}$ which is not squarefree
and has degree $r$, and hence $\ell<r$. Thus, $I_{\underline{s}}$ is
not squarefree. In particular, $I$ is not squarefree and hence
$r\geq 5$ by hypothesis. Consider the ideal
$J:=I_{\underline{s}}\cap K[x_{i_1},\ldots, x_{i_q}]$ where
$\{i_1,\ldots,i_q\}$ is the support of the monomial
$\underline{x}^{\underline{s}}$, and assume without loss of
generality that the variables $x_i$ with $i\in\{i_1,\ldots,i_q\}$
such that $x_i^2$ is a generator of $J$ are $x_{i_1},\ldots,x_{i_p}$
($p\leq q$). The polarization $J'$ of $J$ is the edge ideal obtained
by substituting, for all $j\in\{1,\ldots,p\}$, $x_{i_j}x_{n+j}$ for
$x_{i_j}^2$ where $x_{n+1},\ldots,x_{n+p}$ are $p$ new variables.
Since the graded Betti numbers of $J$ (as $K[x_{i_1},\ldots,
x_{i_q}]$-module) and $J'$ (as $K[x_{i_1},\ldots,
x_{i_q},x_{n+1},\ldots,x_{n+p}]$-module) are the same, by applying
Theorem~\ref{thm_froberg} to the ideal $J'$, one gets that the
complement $G(J')^c$ of the simple graph $G(J')$ has an induced
$\ell$-cycle for some $\ell\leq p+q$. Denote this $\ell$-cycle by
$C$. Note that the simple graph $G(J')$ associated to $J'$ is
obtained from $G(J)$ by substituting each loop $(i_j,i_j)$ ($1\leq
j\leq p$) by the whisker $(i_j,n+j)$. We shall show in
Lemma~\ref{lem_cycles_and_whiskers} below that no induced $t$-cycle
of $G(J')^c$ with $t\geq 5$ passes through any of the vertices
$(n+1)$, \ldots, $(n+p)$. This implies that if $\ell\geq r\geq 5$,
then the vertex set of $C$ should be contained in
$\{(i_1),\ldots,(i_q)\}$, and hence $\ell\leq q<r$, a contradiction.
Thus $\ell$ has to be strictly smaller than $r$ but, by
Proposition~\ref{prop_ineq_betti}, this implies that
$\beta_{\ell-3,\ell}>0$ and by minimality of $r$, one also gets a
contradiction.

\smallskip
We have shown that if $\underline{s}\in\N^n$ is such that
$|\underline{s}|=r$ and $\beta_{i,\underline{s}}>0$ for some $i\leq
r-3$, then the monomial $\underline{x}^{\underline{s}}=x_{i_1}\cdots
x_{i_r}$ is squarefree and $G(I)^c$ has an induced $r$-cycle with
vertex set $\{(i_1),\ldots,(i_r)\}$. Moreover,
$\beta_{i,\underline{s}}=0$ for all $i<r-3$ and
$\beta_{r-3,\underline{s}}=1$, and the result follows using
Proposition~\ref{prop_ineq_betti}.
\end{proof}

\begin{lemma}\label{lem_cycles_and_whiskers}
Let $G$ be a simple graph and $v$ a vertex of $G$ of order 1. Then,
no induced $t$-cycle of the complement $G^c$ of $G$ has $v$ in its
vertex set if $t\geq 5$.
\end{lemma}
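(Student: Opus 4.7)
The plan is a short proof by contradiction using the very strong constraint that $v$ has only one non-neighbor in $G^c$.

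First I would unpack the hypothesis. Since $v$ has order $1$ in the simple graph $G$, there is exactly one vertex $w$ adjacent to $v$ in $G$. Passing to the complement, this means that in $G^c$ the vertex $v$ is adjacent to every vertex other than $v$ itself and $w$; equivalently, $w$ is the unique non-neighbor of $v$ in $G^c$.

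Next, I would suppose for contradiction that an induced $t$-cycle $C$ of $G^c$ contains $v$ in its vertex set, with $t\geq 5$. In the cycle $C$, the vertex $v$ has exactly two neighbors $u_1,u_2$ (its two neighbors along $C$). Because $C$ is an \emph{induced} cycle in $G^c$, the $t-3$ remaining vertices of $C$ (those distinct from $v,u_1,u_2$) are \emph{not} adjacent to $v$ in $G^c$. By the previous paragraph, each such vertex must coincide with the unique non-neighbor $w$ of $v$ in $G^c$.

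This forces $t-3\leq 1$, i.e., $t\leq 4$, contradicting the assumption $t\geq 5$, and completing the proof. No step here is really an obstacle; the whole argument is a one-line counting of non-neighbors of $v$ inside $V(C)$, and the hypothesis $t\geq 5$ is used precisely to guarantee that $V(C)\setminus\{v,u_1,u_2\}$ contains at least two vertices.
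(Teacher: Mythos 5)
Your proof is correct and is essentially the paper's argument: the paper counts the degree of $v$ in the induced subgraph of $G^c$ on the $t$ cycle vertices (it is $t-1$ or $t-2$, too large to be $2$), while you count the non-neighbors of $v$ there (at most $1$, too few to be $t-3$). These are the same observation stated in complementary form, so nothing is gained or lost.
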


\begin{proof}
Let $v'$ be the vertex of $G$ adjacent to $v$. Consider $S$, a
subset with $t$ elements of the vertex set of $G$ containing $v$.
The order of the vertex $v$ in the induced subgraph $G_S^c$ of $G^c$
with vertex set $S$ is either $t-2$ if $v'\in S$, or $t-1$
otherwise. Since $t-1$ and $t-2$ are strictly bigger that $2$ if
$t\geq 5$ and the order of any vertex in a cycle is $2$, the result
follows.
\end{proof}

\begin{remark}\label{rk_ris4_notsquarefree}{\rm
If $I$ is an edge ideal and $r=4$, one can easily check that the
value of $\beta_{1,4}$ obtained in Theorem~\ref{thm_lastthm}
coincides with the one in Corollary~\ref{cor_katzman}. Nevertheless,
when $I$ is not squarefree and $r=4$, the correct value of
$\beta_{1,4}$ is the one given in Corollary~\ref{cor_katzman} and
the formula in Theorem~\ref{thm_lastthm} may be wrong as the
following example shows: if we consider the ideal
$I=(x_1^2,x_1x_3,x_3x_5,x_5x_2,x_2x_4,x_4x_1)\subset\Q[x_1,\ldots,x_5]$,
the complement $G(I)^c$ of $G(I)$ has no induced $4$-cycles while
$\beta_{1,4}=1$. The Betti diagram can be obtained using
\cite{Macaulay} or any other program devoted to computations in
algebraic geometry and commutative algebra like {\sc Singular}, {\sc
CoCoA} or {\sc Macaulay2}. The unique induced subgraph of $G(I)$
that makes a contribution to $\beta_{1,4}$ in the value given by
Lemma~\ref{lem_katzman_with_loops} is the one formed by the loop and
the horizontal edge in its representation below.

\bigskip

\begin{center}
\begin{picture}(55,50)(0,-20)
\begin{tabular}{|r|cccc|}\hline &0&1&2&3\\\hline 2&6&7&1&-\\
3&-&1&3&1 \\\hline
\end{tabular}
\put(-80,-30){\tiny{The Betti diagram of $I$.}}
\end{picture}
\hskip 2.5cm
\begin{picture}(75,70)(0,-5)
 \put(35,60){\circle{10}}
 \put(35,55){\circle*{4}}
 \put(5,33){\circle*{4}}
 \put(65,33){\circle*{4}}
 \put(16,0){\circle*{4}}
 \put(54,0){\circle*{4}}
 \put(17,0){\line(1,3){18}}
 \put(17,0){\line(3,2){48}}
 \put(53,0){\line(-1,3){18}}
 \put(53,0){\line(-3,2){48}}
  \put(5,33){\line(1,0){61}}
  \put(-22,-15){\tiny{The graph $G(I)$ associated to $I$.}}
\end{picture}
\hskip 2cm
\begin{picture}(75,70)(0,-5)
 \put(35,54){\circle*{4}}
 \put(5,33){\circle*{4}}
 \put(65,33){\circle*{4}}
 \put(17,0){\circle*{4}}
 \put(53,0){\circle*{4}}
\put(5,33){\line(3,2){30}}
 \put(65,33){\line(-3,2){30}}
\put(53,0){\line(1,3){11}} \put(17,0){\line(-1,3){11}}
\put(17,0){\line(1,0){36}} \put(-17,-15){\tiny{The complement
$G(I)^c$ of $G(I)$.}}
\end{picture}
\end{center}
 }\end{remark}

 \bigskip

\begin{remark}{\rm
The proof of Theorem~\ref{thm_lastthm} carries some additional
information: a multidegree $\underline{s}\in\N^n$ contributes to
$\beta_{r-3,r}$ \iff{} $\underline{x}^{\underline{s}}$ is the
product of the vertices involved in an induced $r$-cycle of
$G(I)^c$. Moreover, for such an $\underline{s}$, the corresponding
multigraded Betti number $\beta_{r-3,\underline{s}}$ is equal to 1.
  }\end{remark}

 \section{The shape of the Betti diagram}\label{section_shape}

The only nonzero entry on the first column of the Betti diagram of
$I$ is $\beta_{0,2}=\#\,E_{G(I)}$. The following result of Katzman
provides some information on the shape of the Betti diagram when $I$
is an edge ideal.

\begin{lemma}[{\cite[Lemma~2.2]{Katzman}}]\label{lem_katzman_shape}
For any edge ideal, $\beta_{i,j}=0$ for all $i\geq 0$ and
$j>2(i+1)$.
\end{lemma}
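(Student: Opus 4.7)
The plan is to bound $\beta_{i,j}(I)$ from above using the Taylor complex $T_\bullet$ of $I=(f_1,\ldots,f_m)$. Recall that $T_\bullet$ is an $\N^n$-graded free resolution of $I$ (in general non-minimal) whose $i$th free module has a basis $\{e_\sigma\}$ indexed by the subsets $\sigma\subseteq\{1,\ldots,m\}$ of cardinality $i+1$, with $e_\sigma$ placed in multidegree $\lcm{f_j\,;\,j\in\sigma}$.

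The key observation is elementary: since each generator of an edge ideal has degree two and the LCM of a finite set of monomials divides their product, one has
$$
\deg\bigl(\lcm{f_j\,;\,j\in\sigma}\bigr)\leq \sum_{j\in\sigma}\deg(f_j)=2|\sigma|=2(i+1)
$$
for every $\sigma$ with $|\sigma|=i+1$. Thus $T_\bullet$ has no basis element in degree $>2(i+1)$ at homological step $i$, i.e., $T_i$ contains no summand $R(-j)$ with $j>2(i+1)$.

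To conclude, I would invoke the standard fact that the graded Betti numbers of the minimal free resolution are bounded above, term by term in each degree, by those of any graded free resolution; indeed the minimal resolution is obtained from $T_\bullet$ by iteratively splitting off trivial subcomplexes of the form $R(-j)\stackrel{1}{\rar}R(-j)$, a process which can only decrease graded ranks and cannot introduce new degrees. Applied to $T_\bullet$, this immediately yields $\beta_{i,j}(I)=0$ for all $j>2(i+1)$.

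There is no genuine obstacle here; the argument reduces to an elementary bound on the degree of an LCM together with the classical comparison between the minimal resolution and any other resolution. An alternative route through Hochster's formula~(\ref{eq_vantuyl_1}) would require showing that $\tilde H_{j-i-2}(\Delta(G_S^c);K)=0$ for every $S\subseteq V$ with $|S|=j>2(i+1)$, which is strictly less direct. Note finally that the Taylor argument makes no use of squarefreeness, so the same bound $\beta_{i,j}=0$ for $j>2(i+1)$ in fact holds for any ideal generated by monomials of degree two, which is consistent with Lemma~\ref{lem_katzman_with_loops}.
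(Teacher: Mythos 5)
The paper does not give its own proof of this lemma; it simply cites Katzman's Lemma~2.2 and moves on. Your Taylor complex argument is a correct and self-contained proof, and it is the standard way to see this bound. The three steps are each sound: the Taylor complex of an ideal generated in degree two places the basis element $e_\sigma$ at homological step $i$ (i.e.\ step $|\sigma|=i+1$ in the resolution of $I$, matching the paper's indexing in (\ref{eq_resol})) in degree $\deg(\lcm{f_j\,;\,j\in\sigma})\leq 2(i+1)$; the minimal resolution is a graded direct summand of any graded free resolution, so $\beta_{i,j}$ is bounded by the rank of $T_i$ in degree $j$; and therefore $\beta_{i,j}=0$ for $j>2(i+1)$. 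Your closing remark that this argument does not use squarefreeness is also right, and indeed anticipates the spirit of Lemma~\ref{lem_katzman_with_loops}, where the paper extends Katzman's count of $\beta_{i,2(i+1)}$ to non-squarefree ideals via polarization; by contrast the Taylor complex bound needs no polarization at all, since it only uses that each generator has degree two. The one small stylistic quibble is the phrase ``which can only decrease graded ranks and cannot introduce new degrees'': what matters, and what is in fact true, is that splitting off a trivial subcomplex $R(-a)\stackrel{1}{\rar}R(-a)$ removes one rank from two consecutive homological steps in the same internal degree and never increases any graded rank, so this is fine, but stating it as ``the minimal resolution is a direct summand of $T_\bullet$'' is cleaner and avoids any appeal to a process.
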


In terms of the Betti diagram, this means that the entries below the
diagonal through $\beta_{0,2}$ are zero. Our next result improves
this statement.

\begin{theorem}\label{thm_shape_Betti}
Consider an ideal $I\subset \kx$ generated by monomials of degree
two, with $K$ an arbitrary field, and let $m$ be its
Castelnuovo-Mumford regularity. For all $d\geq 3$, set
$\displaystyle{i_d:=\min_{1\leq i\leq p}{\{i;\ \beta_{i,i+d}\neq
0\}}}$ if $\beta_{i,i+d}\neq 0$ for some $i\geq 1$, $i_d:=0$
otherwise. If $I$ does not have a linear resolution, then
 $$ 1\leq i_3<\cdots<i_m\leq p\,.$$
\end{theorem}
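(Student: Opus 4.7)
The plan is to reduce the theorem to the following key implication and then argue by downward induction on $d$:
\emph{if $\beta_{j,j+d+1}(I)\neq 0$ for some $d\geq 3$ and $j\geq 1$, then $\beta_{j',j'+d}(I)\neq 0$ for some $1\leq j'\leq j-1$.}
Given this, since $m$ is the Castelnuovo--Mumford regularity, row $m$ of the Betti diagram has a nonzero entry; and since $I$ is generated in degree $2$ one has $\beta_{0,d}=0$ for all $d\geq 3$, so the minimum in the definition of $i_d$ is effectively over $i\geq 1$. Applying the key implication $m-d$ times then delivers $1\leq i_3<i_4<\cdots<i_m\leq p$.

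For the key implication, I would first polarize $I$ by replacing each square generator $x_i^2$ with $x_iy_i$ for a new variable $y_i$: the resulting ideal is an edge ideal with the same graded Betti numbers as $I$ (see the proof of Lemma~\ref{lem_katzman_with_loops}), so one may assume $I$ is squarefree. Hochster's formula in the form (\ref{eq_vantuyl_1}) then turns the hypothesis $\beta_{j,j+d+1}(I)\neq 0$ into the existence of a subset $S\subseteq V_{G(I)}$ with $|S|=j+d+1$ and $\tilde{H}_{d-1}(\Delta_S;K)\neq 0$, where $\Delta:=\Delta(G(I)^c)$ denotes the clique complex of $G(I)^c$.

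Pick $S_0\subseteq S$ minimal (with respect to inclusion) such that $\tilde{H}_{d-1}(\Delta_{S_0};K)\neq 0$, and fix any $v\in S_0$. Writing $\Delta_{S_0}$ as the union of its closed star at $v$ (a cone, hence contractible) and the deletion $\Delta_{S_0\setminus\{v\}}$, whose intersection is the link of $v$, the associated Mayer--Vietoris long exact sequence reads
$$\cdots\to\tilde{H}_{d-1}(\Delta_{S_0\setminus\{v\}};K)\to\tilde{H}_{d-1}(\Delta_{S_0};K)\to\tilde{H}_{d-2}({\rm link}_{\Delta_{S_0}}(v);K)\to\cdots$$
By the minimality of $S_0$, the leftmost term vanishes, hence $\tilde{H}_{d-2}({\rm link}_{\Delta_{S_0}}(v);K)\neq 0$. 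The link coincides with the full subcomplex $\Delta_T$, where $T$ is the set of vertices of $S_0\setminus\{v\}$ adjacent to $v$ in $\Delta$. Crucially, $v$ cannot be a cone point of $\Delta_{S_0}$ (otherwise $\Delta_{S_0}$ would be contractible, contradicting the nonvanishing of $\tilde{H}_{d-1}(\Delta_{S_0};K)$), so at least one vertex of $S_0\setminus\{v\}$ lies outside $T$, giving $|T|\leq|S_0|-2\leq j+d-1$. Applying Hochster's formula in the reverse direction, $\beta_{j',j'+d}(I)\neq 0$ for $j':=|T|-d\leq j-1$, which closes the induction.

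The main obstacle is securing the \emph{strict} drop $|T|\leq|S_0|-2$ rather than $|T|\leq|S_0|-1$: without it the argument would only give $j'\leq j$, which is not enough to propagate through the $i_d$'s. The cone-point observation, combined with the minimality of $S_0$, is exactly what provides the extra vertex.
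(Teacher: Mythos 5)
Your argument is correct and follows essentially the same route as the paper's Lemma~\ref{lem_shape}: after reducing to the squarefree case via polarization, you use Hochster's formula to produce a subset with nonvanishing reduced homology, pick a vertex whose link is strictly smaller (your cone-point observation is precisely the paper's ``every pair linked would force a simplex'' argument), and push the homology down one degree through the star/deletion/link long exact sequence (the paper cites Hibi's version of this sequence). The only cosmetic differences are that you take an inclusion-minimal $S_0$ inside the Hochster witness $S$ rather than the cardinality-minimal witness used in the paper, and that you phrase the descent from degree $d+1$ to $d$ rather than from $d$ to $d-1$; neither changes the substance.
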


Now we know that the shape of the Betti diagram of an ideal
associated to a graph is:

{\small
$$
\begin{array}{|c|ccccccccccccc|}\hline
 &0&1&\ldots&i_3-1&i_3&\ldots&i_4-1&i_4&\ldots&i_m-1&i_m&\ldots&p
 \\\hline
2&\bullet&\ast&\cdots&\ast&\ast&\cdots&\ast&\ast&\cdots&\ast&\ast&\cdots&\ast
 \\
3&-&-&\cdots&-&\bullet&\cdots&\ast&\ast&\cdots&\ast&\ast&\cdots&\ast
 \\
4&-&-&\cdots&-&-&\cdots&-&\bullet&\cdots&\ast&\ast&\cdots&\ast
 \\
\vdots&&&&&&&&&&&&&
 \\
m&-&-&\cdots&-&-&\cdots&-&-&\cdots&-&\bullet&\cdots&\ast
 \\\hline
\end{array}
$$
\begin{center}
$\bullet=$ nonzero entry\,; $-=$ zero entry\,; $\ast=$ entry that
may be zero or not.
\end{center}
}

\bigskip
This result will be a direct consequence of our last lemma. Recall
that given a simplicial complex $\Delta$ and a vertex $x$ of
$\Delta$, the subcomplex $\link{x,\Delta}$ of $\Delta$ is defined by
$$\link{x,\Delta}:= \{F\in \Delta ;\, x\not\in F\ {\rm
and}\ F\cup\{x\}\in \Delta\}\,.$$

\begin{lemma}\label{lem_shape}
Assume that $I$ is an edge ideal with nonlinear resolution. Using
the notations in Theorem~\ref{thm_shape_Betti}, if $i_d\geq 1$ for
some $d\geq 3$, then there exists $i<i_d$ such that
$\beta_{i,i+d-1}\neq 0$.
\end{lemma}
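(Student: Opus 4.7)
The plan is to combine Hochster's formula (\ref{eq_vantuyl_1}) with a Mayer--Vietoris (link-and-deletion) argument on the clique complex $\Delta(G_S^c)$, so as to pass from a nonzero Betti number $\beta_{i_d,\,i_d+d}$ to some nonzero $\beta_{i,\,i+d-1}$ with $i<i_d$.

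First, using $\beta_{i_d,\,i_d+d}\neq 0$ together with (\ref{eq_vantuyl_1}), I would fix a subset $S\subseteq V$ with $|S|=i_d+d$ and $\tilde{H}_{d-2}(\Delta(G_S^c),K)\neq 0$. Writing $\Delta:=\Delta(G_S^c)$, I would then observe that no vertex $x\in S$ can be $G$-isolated in $G_S$: otherwise $x$ would be $G^c$-adjacent to every other vertex of $S$ and $\Delta$ would be a cone with apex $x$, hence contractible, contradicting the nontriviality of $\tilde{H}_{d-2}(\Delta)$. Writing $N_S(x)$ for the $G^c$-neighborhood of $x$ inside $S$, this gives $|N_S(x)|\leq |S|-2 = i_d+d-2$ for every $x\in S$.

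Next, for any $x\in S$ I would apply Mayer--Vietoris to the decomposition $\Delta=(\Delta\setminus x)\cup \mathrm{star}(x,\Delta)$, whose intersection is exactly $\link{x,\Delta}$. Since the closed star is a cone and hence contractible, the resulting long exact sequence reduces to
$$\cdots\lar \tilde{H}_{d-2}(\Delta\setminus x)\lar \tilde{H}_{d-2}(\Delta)\lar \tilde{H}_{d-3}(\link{x,\Delta})\lar \tilde{H}_{d-3}(\Delta\setminus x)\lar\cdots,$$
so nonvanishing of $\tilde{H}_{d-2}(\Delta)$ forces at least one of $\tilde{H}_{d-2}(\Delta\setminus x)$ or $\tilde{H}_{d-3}(\link{x,\Delta})$ to be nonzero. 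The first alternative can be ruled out: applied to $S\setminus\{x\}$ (which has $i_d+d-1$ elements), formula (\ref{eq_vantuyl_1}) would give $\beta_{i_d-1,\,i_d-1+d}\neq 0$, contradicting the minimality defining $i_d$ when $i_d\geq 2$, and yielding the impossible $\beta_{0,d}\neq 0$ (with $d\geq 3$) when $i_d=1$.

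Therefore $\tilde{H}_{d-3}(\link{x,\Delta})\neq 0$, and a direct check identifies $\link{x,\Delta}$ with $\Delta(G_{N_S(x)}^c)$. Another application of (\ref{eq_vantuyl_1}) then yields $\beta_{|N_S(x)|-d+1,\,|N_S(x)|}\neq 0$, and the bound $|N_S(x)|\leq i_d+d-2$ produces the desired index $i:=|N_S(x)|-d+1\leq i_d-1<i_d$. The point requiring most care is the boundary case $i_d=1$, since the minimality of $i_d$ alone does not rule out the first Mayer--Vietoris alternative there; one must invoke separately that $\beta_{0,j}=0$ for every $j\neq 2$, which holds because $I$ is generated by quadrics. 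Beyond that, the proof is a routine link/deletion recursion powered by Hochster's formula.
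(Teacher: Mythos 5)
Your proof is correct and matches the paper's approach: both use Hochster's formula to pass to simplicial homology, establish that a vertex $x$ of the critical subset $S$ has $G^c$-neighborhood strictly smaller than $S\setminus\{x\}$ (you via a cone argument, the paper via a full-simplex argument), apply a link/deletion long exact sequence to transfer nonvanishing of $\widetilde{H}_{d-2}(\Delta(G_S^c),K)$ to $\widetilde{H}_{d-3}(\link{x,\Delta(G_S^c)},K)$, and invoke Hochster's formula once more. Your Mayer--Vietoris sequence is the sequence from \cite{hibi} that the paper cites, and your explicit treatment of the $i_d=1$ boundary case makes precise a point the paper handles implicitly.
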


\begin{proof}
Let $V$ be the vertex set of the graph $G(I)$, and consider $d$,
$3\leq d\leq m$, such that $i_d\geq 1$. Then, $\beta_{i_d,i_d+d}\neq
0$ and $\beta_{i,i+d}=0$ for all $i$, $0\leq i<i_d$. Using
Hochster's formula as stated in (\ref{eq_vantuyl_1}), we have that
there exists $S\subseteq V$ with $|S|=i_d+d$ such that
$\widetilde{H}_{d-2}(\Delta(G_S^c),K)\neq 0$ while
$\widetilde{H}_{d-2}(\Delta(G_{S'}^c),K)=0$ for any subset
$S'\subset V$ with $|S'|<|S|$.

\smallskip
For $x\in S$, denote by $S_x$ the vertex set of
$\link{x,\Delta(G_{S}^c)}$. We claim that there exists a vertex
$x\in S$ such that $|S_x|<|S\setminus\{x\}|$. Otherwise, every pair
of vertices in $S$ would be linked in $\Delta(G_{S}^c)$ and hence
$\Delta(G_{S}^c)$ would be an $(i_d+d)$-simplex. Thus,
$\widetilde{H}_{j}(\Delta(G_S^c),K)=0$ for all $j\neq i_d+d-2
>d-2$, a contradiction with the fact that
$\widetilde{H}_{d-2}(\Delta(G_S^c),K)\neq 0$.

\smallskip
So we can choose a vertex $x\in S$ such that
$|S_x|<|S\setminus\{x\}|$ and consider the following long exact
sequence introduced in \cite{hibi},
$$
\cdots\rar\widetilde{H}_{d-3}(\link{x,\Delta(G_{S}^c),K}\rar\widetilde{H}_{d-2}(\Delta(G_{S}^c),K)\rar
\widetilde{H}_{d-2}(\Delta(G_{S}^c)_{S\setminus\{x\}},K)\rar\cdots
$$
where $\Delta(G_{S}^c)_{S\setminus\{x\}}$ is the subcomplex of
$\Delta(G_{S}^c)$ whose vertex set is $S\setminus\{x\}$. Note that
$\Delta(G_{S}^c)_{S\setminus\{x\}}=\Delta(G_{S\setminus\{x\}}^c)$
and so $\widetilde{H}_{d-2}(\Delta(G_{S}^c)_{S\setminus\{x\}},K)=0$
because $|S\setminus\{x\}|<|S|$. Since
$\widetilde{H}_{d-2}(\Delta(G_S^c),K)\neq 0$, one gets that
$\widetilde{H}_{d-3}(\link{x,\Delta(G_{S}^c)},K)\neq 0$.

\smallskip
Since we have chosen $x\in S$ such that
$|S_x|<|S\setminus\{x\}|=i_d+d-1$ and observing that
$\link{x,\Delta(G_{S}^c)}=\Delta(G^c_{S_x})$, we get that
$\widetilde{H}_{d-3}(\Delta(G_{S_x}^c),K)\neq 0$ with $j:=|S_x|\leq
i_d+d-2$. Using again Hochster's formula, we deduce that
$\beta_{j-d+1,j}>0$. Setting $i:=j-d+1\leq i_d+d-2-d+1=i_d-1<i_d$,
one has that $\beta_{i,i+d-1}>0$ and we are done.
\end{proof}

\begin{proof}[Proof of {Theorem~\ref{thm_shape_Betti}}]
The result trivially holds if $m=3$ and since a monomial ideal and
its polarization have the same Betti diagram, let us assume that
$m\geq 4$ and that $I$ is an edge ideal. Starting with $d=m$ and
applying Lemma~\ref{lem_shape}, one gets that there is a nonzero
entry on the $(d-1)$th row, i.e., $i_{d-1}\geq 1$, and that
$i_{d-1}<i_d$. Iterating the argument until $d=4$, the result
follows.
\end{proof}

\begin{remark}{\rm
\begin{enumerate}
\item
As observed in \cite[Section~4]{Katzman}, the Betti diagram (in
particular the number of rows and columns) may depend on the
characteristic of the field $K$. Nevertheless, it will have the
shape described in Theorem~\ref{thm_shape_Betti} in any
characteristic.
\item
By Theorem~\ref{thm_main}, $i_3$ and $\beta_{i_3,i_3+3}$ do not
depend on the characteristic of $K$. The four graphs given in
\cite[Appendix~A]{Katzman} show that the characteristic of $K$ may
be relevant for the computation of $\beta_{i_4,i_4+4}$.
\end{enumerate}
 }\end{remark}

All the ingredients are now available to prove the result stated in
the introduction:

\begin{proof}[Proof of Theorem~\ref{thm_main}]

(\ref{item_in_mainthm_linpres}): it is Corollary~\ref{cor_katzman}.

(\ref{item_in_mainthm_nonlinpres}): Assume that $i_3>1$. Then $G(I)$
has no induced subgraph consisting of two disjoint edges by
(\ref{item_in_mainthm_linpres}), Theorem~\ref{thm_lastthm} implies
that $G(I)^c$ has an induced cycle of length $\geq 5$ and gives the
values of $i_3$ and $\beta_{i_3,i_3+3}$, and the last statement is a
consequence of Theorem~\ref{thm_shape_Betti}. Conversely, if $G(I)$
has no induced subgraph consisting of two disjoint edges then
$i_3\neq 1$ by (\ref{item_in_mainthm_linpres}), and if $G(I)^c$ has
an induced cycle of length $\geq 5$ then $i_3\neq 0$ by
Proposition~\ref{prop_ineq_betti}.

(\ref{item_in_mainthm_linres}): The `if and only if' follows from
(\ref{item_in_mainthm_linpres}) and
(\ref{item_in_mainthm_nonlinpres}), and the last statement is a
consequence of Theorem~\ref{thm_shape_Betti}.
\end{proof}

\section*{Acknowledgements}
Research partially supported by MTM2007-61444, Ministerio de
Educaci\'on y Ciencia (Spain). The authors thank Isabel Bermejo and
Aron Simis for helpful conversations.

\bigskip\bigskip\noindent

\end{document}